\documentclass[preprint,10pt]{article}

\usepackage{amsmath,amsthm,amssymb}
\usepackage{mathptmx}
\usepackage{graphicx,graphics}
\usepackage{tikz}
\usepackage{hyperref}
\newtheorem{theorem}{Theorem}[section]
\newtheorem{corollary}[theorem]{Corollary}
\newtheorem{lemma}[theorem]{Lemma}
\newtheorem{prop}[theorem]{Proposition}
\theoremstyle{definition}
\newtheorem{definition}[theorem]{Definition}
\newtheorem{observation}[theorem]{Observation}

\newtheorem{problem}{Problem}[section]

\newtheorem{remark}[theorem]{Remark}
\usepackage{mathrsfs}
\usepackage{enumerate}
\usepackage{float}
\usepackage[english]{babel}

\usepackage{graphicx}
\usepackage{tikz}
\usetikzlibrary{shapes.geometric,arrows}
\usetikzlibrary{mindmap}
\usepackage{tikz}
\usetikzlibrary{automata}
\usetikzlibrary{arrows}
\usetikzlibrary{positioning,calc}
\usetikzlibrary{graphs}
\usetikzlibrary{graphs.standard}
\usetikzlibrary{arrows,decorations.markings}
\usepackage{tkz-graph}
\usetikzlibrary{chains,fit,shapes}
\usetikzlibrary{calc}

\tikzset{every loop/.style={min distance=10mm,looseness=10}}
\tikzset{every state/.style={minimum size=2mm}}
\newcount\nodecount
\tikzgraphsset{
	declare={subgraph N}%
	{
		[/utils/exec={\global\nodecount=0}]
		\foreach \nodetext in \tikzgraphV
		{  [/utils/exec={\global\advance\nodecount by1}, 
			parse/.expand once={\the\nodecount/\nodetext}] }
	},
	declare={subgraph C}%
	{
		[cycle, /utils/exec={\global\nodecount=0}]
		\foreach \nodetext in \tikzgraphV
		{  [/utils/exec={\global\advance\nodecount by1}, 
			parse/.expand once={\the\nodecount/\nodetext}] }
	},
	declare={subgraph W}%
	{
		[/utils/exec={\global\nodecount=0}]
		\foreach \nodetext in \tikzgraphV
		{  [/utils/exec={\global\advance\nodecount by1}, 
			parse/.expand once={\the\nodecount/\nodetext}] }
	}
}

\numberwithin{equation}{section}
\setlength{\textwidth}{15cm} \setlength{\oddsidemargin}{0cm}
\setlength{\evensidemargin}{0cm} \setlength{\footskip}{40pt}
\pagestyle{plain}
\author{\hspace{1cm} Eshwar Srinivasan \ and Ramesh Hariharasubramanian \\
	{{\footnotesize s.eshwar@iitg.ac.in},\ {\footnotesize  ramesh\_h@iitg.ac.in}}\\{\footnotesize Department of Mathematics, Indian Institute of Technology Guwahati, Guwahati, Assam 781039, India}}
\begin{document}
	\title{On semi-transitive orientability of circulant graphs}
	\maketitle
	
	\begin{abstract}
		A graph $G = (V, E)$ is said to be \textit{word-representable} if a word $w$ can be formed using the letters of the alphabet $V$ such that for every pair of vertices $x$ and $y$, $xy \in E$ if and only if $x$ and $y$ alternate in $w$. A \textit{semi-transitive} orientation is an acyclic directed graph where for any directed path $v_0 \rightarrow v_1 \rightarrow \ldots \rightarrow v_m$, $m \ge 2$ either there is no arc between $v_0$ and $v_m$ or for all $1 \le i < j \le m$ there is an arc between $v_i$ and $v_j$. An undirected graph is semi-transitive if it admits a semi-transitive orientation. For given positive integers $n, a_1, a_2, \ldots, a_k$, we consider the undirected circulant graph with set of vertices $\{0, 1, 2, \ldots, n-1\}$ and the set of edges$\{ij ~ | ~ (i - j) \pmod n$ or $(j-i) \pmod n$ are in $\{a_1, a_2, \ldots, a_k\}\}$, where $ 0 < a_1 < a_2 < \ldots < a_k < (n+1)/2$. Recently, Kitaev and Pyatkin have shown that every $4$-regular circulant graph is semi-transitive. Further, they have posed an open problem regarding the semi-transitive orientability of circulant graphs for which the elements of the set $\{a_1, a_2, \ldots, a_k\}$ are consecutive positive integers. 
		
		In this paper, we solve the problem mentioned above. In addition, we show that under certain assumptions, some $k(\ge5)$-regular circulant graphs are semi-transitive, and some are not. Moreover, since a semi-transitive orientation is a characterisation of word-representability, we give some upper bound for the representation number of certain $k$-regular circulant graphs.
		
		\textbf{Keywords: }word-representability, semi-transitive orientation, circulant graphs.
	\end{abstract}

	\maketitle
	\pagestyle{myheadings}
	
	\section{Introduction}
	
	A graph $G = (V, E)$ is said to be \textit{word-representable} if a word $w$ can be formed using the letters of the alphabet $V$ such that for every pair of vertices $x$ and $y$, $xy \in E$ if and only if $x$ and $y$ alternate in $w$.A \textit{semi-transitive} orientation is an acyclic directed graph where for any directed path $v_0 \rightarrow v_1 \rightarrow \ldots \rightarrow v_m$, $m \ge 2$ either there is no arc between $v_0$ and $v_m$ or for all $1 \le i < j \le m$ there is an arc between $v_i$ and $v_j$. An undirected graph is semi-transitive if it admits a semi-transitive orientation. For detailed reading, we refer the reader to \cite{kitaev2008representable, kitaev2017comprehensive, kitaev2015words, halldorsson2011alternation, kitaev2008word}.
	
		In this paper, we study the semi-transitive orientability of circulant graphs and answer an open problem (Problem 3) posed by Kitaev and Pyatkin in \cite{kitaev2020semit}. Moreover, since a semi-transitive orientation is a characterisation of word-representability, we give some upper bound for the representation number of certain $k$-regular circulant graphs. For any graph $G$, $|G|$ denotes the \textit{order} of the graph. For a word $w$, $|w|$ denotes the length of the word. If $i$ is any positive integer, then for brevity, we denote $i \pmod n$ by $(i)_{n}$. For any vertex $x$ of $G$, we denote the neighbours of the vertex $x$ as $N_G(x)$. All graphs considered are simple and undirected.
	\subsection{Circulant graph}
	
	A \textit{circulant graph} $C(n; R)$ for a set $R = \{a_1, a_2, \ldots, a_k\}$ is a graph with the vertex set $\{0, 1, \ldots, n-1\}$ and an edge set $\{ij ~ | ~ (i - j) \pmod n$ or $(j-i) \pmod n$ are in $\{a_1, a_2, \ldots, a_k\}\}$, where $ 0 < a_1 < a_2 < \ldots < a_k < (n+1)/2$. Let $n, r$ be positive integers, $n \ge 2$ and $r < n/2$. Then, $C(n; r)$ consists of a collection of disjoint cycles. If $d = gcd(n, r)$, then there are $d$ such disjoint cycles and each has length $n/d$. We say that each of these cycles has \textit{period} $r$, \textit{length} $n/d$, and \textit{rotation} $r/d$.
	
		\begin{theorem}[\cite{boesh84}, Proposition 1]\label{con}
		Circulant graph $C(n; R)$ for a set $R = \{a_1, a_2,\ldots, a_k\}$ is connected iff $gcd(n, a_1, a_2,$\ldots$, a_k) = 1$.
	\end{theorem}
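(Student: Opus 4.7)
The plan is to prove the two directions separately. The forward implication is handled by a divisibility obstruction on walks, while the backward implication uses Bézout's identity to construct a path from vertex $0$ to vertex $1$, after which vertex-transitivity of the circulant graph finishes the argument.

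For the forward direction, I would argue by contrapositive. Suppose $d := \gcd(n, a_1, \ldots, a_k) > 1$. From any vertex $v$, a one-step neighbour has the form $(v \pm a_i)_n$ for some $i$, and since each $a_i$ and $n$ are divisible by $d$, this neighbour lies in the same residue class modulo $d$ as $v$. A straightforward induction on walk length shows that the connected component of vertex $0$ is contained in $\{0, d, 2d, \ldots, n-d\}$, a proper subset of the vertex set when $d > 1$. Hence $C(n; R)$ is disconnected.

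For the backward direction, assume $\gcd(n, a_1, \ldots, a_k) = 1$. By Bézout's identity, there exist integers $c_0, c_1, \ldots, c_k$ such that
\[
c_0 n + c_1 a_1 + c_2 a_2 + \cdots + c_k a_k = 1.
\]
Starting from vertex $0$ and using $|c_i|$ consecutive steps of the form $+a_i$ if $c_i > 0$ or $-a_i$ if $c_i < 0$ (allowed in $C(n;R)$ because the edge condition is symmetric in $\pm a_i \pmod n$), we build a walk from $0$ to the vertex $\bigl(\sum_{i=1}^{k} c_i a_i\bigr)_n = 1$. Since the map $x \mapsto (x+1)_n$ is a graph automorphism of $C(n;R)$, iterating it shows that every pair of consecutive integers (mod $n$) is connected by a walk, and hence any two vertices lie in the same component.

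I do not expect a serious obstacle here: the argument is the standard Cayley-graph connectivity criterion specialised to the cyclic group $\mathbb{Z}/n\mathbb{Z}$ with symmetric generating set $\{\pm a_1, \ldots, \pm a_k\}$. The only point that requires care is tracking signs properly in the Bézout combination, which is made painless by the fact that the edge set is defined symmetrically so that both $+a_i$ and $-a_i$ steps are available at every vertex.
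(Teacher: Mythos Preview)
Your argument is correct and is the standard proof of this fact. Note, however, that the paper does not supply its own proof of this statement: it is quoted as Proposition~1 from \cite{boesh84} and used as a black box, so there is no in-paper proof to compare against. Your write-up would serve perfectly well as a self-contained justification should one be desired.
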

	
	\begin{theorem}[\cite{kamal2013}, Theorem 10]\label{iso}
		For $n \in \mathbb{N}$, $P_2 ~\square~ C(2n+1; R) \cong C(2(2n+1); 2R \cup \{2n + 1\}) \cong C(2(2n+1); 2dR \cup \{2n + 1\})$, where $gcd(2(2n + 1), d) = 1$.
	\end{theorem}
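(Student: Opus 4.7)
The plan is to prove the chain of isomorphisms by exhibiting explicit vertex bijections and checking adjacency preservation. Throughout I set $m = 2n+1$, so that the statement becomes $P_2 \square C(m; R) \cong C(2m;\, 2R \cup \{m\}) \cong C(2m;\, 2dR \cup \{m\})$ with $\gcd(2m, d) = 1$.

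For the first isomorphism I would define $\phi : \{0,1\} \times \mathbb{Z}_m \to \mathbb{Z}_{2m}$ by $\phi(i, j) = (2j + im) \bmod 2m$. Since $m$ is odd, $\phi$ sends $\{0\} \times \mathbb{Z}_m$ bijectively onto the even residues mod $2m$ and $\{1\} \times \mathbb{Z}_m$ bijectively onto the odd residues, so $\phi$ itself is a bijection. For adjacency there are three cases to check. A $P_2$-edge $(0,j)(1,j)$ maps to a pair with circular difference $m$, which lies in the target connection set. A circulant edge $(i,j)(i,k)$ with $(j-k) \bmod m \in R$ maps to a pair whose difference is $2(j-k) \bmod 2m \equiv \pm 2r \pmod{2m}$ for some $r \in R$, and since $2r \le 2n < m$ this already lies in the allowed connection set $2R$ in canonical form. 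Conversely, jumps by elements of $2R$ preserve parity in $\mathbb{Z}_{2m}$ while the jump $m$ toggles parity, so every edge of $C(2m;\, 2R \cup \{m\})$ falls into exactly one of these three cases, and running $\phi^{-1}$ recovers an edge of $P_2 \square C(m; R)$.

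For the second isomorphism I would apply the standard multiplier automorphism $\psi : x \mapsto dx \bmod 2m$, which, for any $d$ coprime to $2m$, is a bijection of $\mathbb{Z}_{2m}$ carrying $C(2m; S)$ onto $C(2m; dS)$ for any connection set $S$. The only computation needed is that $d \cdot (2R \cup \{m\}) \equiv 2dR \cup \{m\} \pmod{2m}$ (up to the usual normalization into $\{1, \ldots, m\}$): the part $d \cdot 2R = 2dR$ is immediate, and for the jump $m$ note that $\gcd(d, 2m) = 1$ forces $d$ to be odd, so $(d-1)m$ is a multiple of $2m$, giving $dm \equiv m \pmod{2m}$.

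The main place requiring care is the adjacency verification in the first isomorphism. Because the circulant definition uses the symmetric condition that $(i-j) \bmod n$ or $(j-i) \bmod n$ lies in the connection set, one must track signs and the choice of canonical representative in $\{1, \ldots, \lfloor n/2 \rfloor\}$ when passing from modulus $m$ to modulus $2m$. Once this bookkeeping is in place, both directions of adjacency reduce to routine modular arithmetic, and chaining the two isomorphisms gives the theorem.
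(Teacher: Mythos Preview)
The paper does not supply its own proof of this statement: it is quoted as Theorem~10 of \cite{kamal2013} and used later as a black box, so there is nothing in the paper to compare your argument against.

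That said, your proposal is correct and is essentially the standard proof. The map $\phi(i,j) = (2j + im) \bmod 2m$ is exactly the Chinese Remainder isomorphism $\mathbb{Z}_2 \times \mathbb{Z}_m \to \mathbb{Z}_{2m}$ (available because $m = 2n+1$ is odd), and your case analysis shows it carries the Cartesian-product edge set onto the circulant edge set; the converse direction via parity is the right way to see that no extra edges appear. For the second isomorphism, the multiplier map $x \mapsto dx$ is the standard \'Ad\'am-type isomorphism of circulants, and your observation that $\gcd(d,2m)=1$ forces $d$ odd, hence $dm \equiv m \pmod{2m}$, is precisely the computation needed to see that the jump $m$ is fixed. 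The only cosmetic point is the one you already flag: the set $2dR$ in the target need not consist of residues in $\{1,\dots,m\}$, so the connection set should be read up to the symmetric normalization $s \mapsto \min(s \bmod 2m,\, 2m - (s \bmod 2m))$; since the multiplier map acts on the symmetric set $\pm S$, this causes no difficulty.
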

	\subsection{Word-representability}
	
	Suppose that $w$ is a word over some alphabet, and $x$ and $y$ are two distinct letters in $w$. We say that $x$ and $y$ \textit{alternate} in $w$ if, after deleting all letters except the copies of $x$ and $y$ in $w$, we either obtain a word $xyxy\ldots$ (of odd or even length) or a word $yxyx\ldots$ (of odd or even length). Hence, by definition, if $w$ has a single occurrence of $x$ and a single occurrence of $y$, then $x$ and $y$ alternate in $w$.
	\begin{definition}[\cite{kitaev2015words}, Definition 3.0.5]\label{wr}
		A graph $G = (V, E)$ is said to be \textit{word-representable} if a word $w$ can be formed using the letters of the alphabet $V$ such that for every pair of vertices $x$ and $y$, $xy \in E$ if and only if $x$ and $y$ alternate in $w$. We say that $w$ \textit{represents} $G$, and $w$ is called a \textit{word-representant} of $G$. Also, it is essential that $w$ contains each letter of $V$ at least once.
	\end{definition}
	\begin{remark}[\cite{kitaev2017comprehensive}, Remak 1]\label{r1}
		The class of word-representable graphs is hereditary. That is, every induced subgraph of a word-representable graph is also word-representable.
	\end{remark}
	A word is called \textit{k-uniform} if each letter occurs exactly $k$ times in it. A graph $G$ is \textit{$k$-word-representable} if it can be represented by a $k$-uniform word. The least $k$ for which a word-representant of a graph $G$ is $k$-uniform is called the \textit{representation number} of the graph $G$, and it is denoted by $\mathcal{R}(G)$.
	\begin{theorem}[\cite{kitaev2008representable}, Theorem 7]
		A graph is word-representable if and only if it is $k$-word-representable for some $k$
	\end{theorem}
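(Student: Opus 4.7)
The ``if'' direction is essentially immediate: any $k$-uniform word representing $G$ is a valid representing word, so a $k$-word-representable graph is word-representable. I will therefore concentrate on the ``only if'' direction: given a word $w$ representing $G$, my plan is to produce a $k$-uniform word also representing $G$, for some positive integer $k$.

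The central tool I plan to use is the following lemma: if $w$ represents $G$ and $\pi(w)$ denotes the \emph{initial permutation} of $w$, namely the word of length $|V|$ listing each letter of $V$ exactly once in the order of its first appearance in $w$, then the concatenation $\pi(w)\cdot w$ still represents $G$. To prove this, I would argue pair by pair: for any two letters $x,y\in V$, without loss of generality assume $x$ appears before $y$ in $w$. Then the restriction of $\pi(w)$ to $\{x,y\}$ is $xy$, while the restriction of $w$ to $\{x,y\}$ begins with $x$, so the restriction of $\pi(w)\cdot w$ to $\{x,y\}$ has the form $xyx\cdots$; a short check shows this is alternating exactly when the restriction of $w$ to $\{x,y\}$ was. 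A symmetric argument using the \emph{final permutation} $\sigma(w)$ (the letters listed in order of their last appearance) shows that $w\cdot\sigma(w)$ also represents $G$.

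With this lemma in hand, the plan is to iteratively pad $w$ by prepending initial permutations and appending final permutations so that every letter eventually appears the same number of times. Setting $k$ to be the maximum number of occurrences of any letter in $w$, the goal is to raise each letter's number of occurrences to a common value $k'\geq k$. The main obstacle is that the basic operation $w\mapsto\pi(w)\cdot w$ raises every letter's count by one simultaneously and so cannot on its own equalise the counts; a more selective move is needed. The refinement I have in mind is that one may prepend only the subword of $\pi(w)$ supported on a set $S\subseteq V$ that forms a \emph{suffix} of the first-occurrence order (or, dually, append the prefix of $\sigma(w)$ supported on a prefix of the last-occurrence order), which boosts by one only the counts of letters in $S$ while still preserving representation; the verification is the same pair-by-pair case analysis as above. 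A finite, carefully ordered sequence of such selective padding steps then turns $w$ into a $k'$-uniform word representing $G$, completing the argument.
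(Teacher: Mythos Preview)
The paper does not give its own proof of this theorem: it is quoted verbatim from \cite{kitaev2008representable} without argument, so there is no ``paper's proof'' to compare against. Your proposal must therefore stand or fall on its own, and unfortunately the final step contains a genuine gap.

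Your two lemmas are fine. It is true that prepending any suffix of $\pi(w)$, or appending any prefix of $\sigma(w)$, preserves representability; your pair-by-pair verification is exactly right, and the suffix/prefix restriction is precisely what prevents the bad case of inserting a letter $p$ (but not $q$) ahead of a $p$ that already leads. The problem is the sentence ``a finite, carefully ordered sequence of such selective padding steps then turns $w$ into a $k'$-uniform word'': this is asserted, not proved, and in fact it is false.

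Take $w=abbba$, which represents the edgeless graph on $\{a,b\}$. Here $\pi(w)=ab$, $\sigma(w)=ba$, and the counts are $(o(a),o(b))=(2,3)$. Your operations are: prepend $b$ or $ab$ (suffixes of $\pi$), or append $b$ or $ba$ (prefixes of $\sigma$). Track the potential
\[
\Phi \;=\; \bigl(o(b)-o(a)\bigr) \;+\; [\pi=ab] \;+\; [\sigma=ba],
\]
where each bracket is $1$ if the condition holds and $0$ otherwise. A direct check of all four single-letter moves shows each leaves $\Phi$ unchanged (the full-length moves change nothing relevant). Initially $\Phi=1+1+1=3$, so at every stage $o(b)-o(a)=3-[\pi=ab]-[\sigma=ba]\ge 1$. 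Hence no sequence of your padding moves ever equalises the counts, even though the graph is obviously $2$-representable (by $aabb$, say).

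The repair used in \cite{kitaev2008representable} is not to pad only at the two ends but to insert a single new copy of a deficient letter $x$ \emph{inside} $w$, at a carefully chosen position; the point of Observation~\ref{pw} in that argument is to guarantee such a safe interior position exists whenever $o(x)$ is below the maximum. Your end-padding moves alone are too rigid to achieve this.
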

	\begin{theorem}[\cite{kitaev13}, Theorem 18]\label{pr}
		For $n \ge 4$, $\mathcal{R}(Pr_n) = 3$.
	\end{theorem}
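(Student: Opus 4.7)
The plan is to prove the equality by establishing the upper bound $\mathcal{R}(Pr_n) \le 3$ and the lower bound $\mathcal{R}(Pr_n) \ge 3$ separately.

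For the upper bound, I would exhibit an explicit $3$-uniform word-representant of $Pr_n$. Label the outer cycle of $Pr_n$ as $v_0, v_1, \ldots, v_{n-1}$ and the inner cycle as $v'_0, v'_1, \ldots, v'_{n-1}$, with rungs $v_i v'_i$. A natural template is a concatenation of three blocks $w = B_1 B_2 B_3$, each a permutation of all $2n$ letters, designed so that (i) each consecutive pair on the outer rim alternates in $w$, (ii) each consecutive pair on the inner rim alternates, (iii) each matched pair $v_i v'_i$ alternates, and (iv) every other pair fails to alternate. The $\mathbb{Z}_n$-symmetry of $Pr_n$ suggests defining $B_1$ by interleaving the two rims in a canonical shifted order and choosing $B_2, B_3$ to be suitable cyclic rotations or reversals of $B_1$; once the candidate $w$ is written down, verifying the four alternation conditions is a finite, essentially mechanical task organised around the rotational symmetry.

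For the lower bound, I would invoke the well-known characterization that a graph is $2$-uniformly representable if and only if it is a circle graph, and reduce to showing that $Pr_n$ is not a circle graph for $n \ge 4$. Assume for contradiction that a $2$-uniform word $w$ represents $Pr_n$. Identify each vertex with a chord on a circle whose two endpoints are its two occurrences in $w$; then edges of $Pr_n$ correspond precisely to crossings of chords. The chords split into two families $F$ and $F'$ coming from the outer and inner rims, and each family, taken alone, must encode an induced $n$-cycle (the two rims are induced in $Pr_n$). I would analyse the tightly constrained cyclic arrangements of the $2n$ endpoints of $F$ that realise $C_n$ and show that no way of interleaving the $2n$ endpoints of $F'$ can simultaneously realise $C_n$ on $F'$ and the perfect matching $v_i \leftrightarrow v'_i$ between $F$ and $F'$ without producing extra crossings that would correspond to non-edges of $Pr_n$.

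The main obstacle is the lower bound. The classical forbidden induced subgraphs for circle graphs (such as $W_5$ or $K_{3,3}$) are not obviously contained in the triangle-free cubic graph $Pr_n$, so a direct forbidden-subgraph argument is unavailable, and one is pushed into a combinatorial argument on chord diagrams. The hypothesis $n \ge 4$ should enter precisely here, since $Pr_3$ (the triangular prism) carries extra triangles that alter the chord-count balance; handling the case analysis uniformly for all $n \ge 4$ will require exploiting the rotational symmetry rather than ad hoc casework on small values of $n$.
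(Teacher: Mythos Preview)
The paper does not contain a proof of this statement: it is quoted verbatim from \cite{kitaev13} as a known result and then invoked as a black box in the proof of the later theorem identifying $C(2n;a,n)$ with $Pr_n$ when $\gcd(a,2n)\neq 1$. So there is no in-paper argument to compare your proposal against.

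That said, a few comments on your plan. The overall two-step structure (explicit $3$-uniform word for the upper bound, non-$2$-representability for the lower bound) is exactly right, and the reduction ``$2$-representable $\Leftrightarrow$ circle graph'' is valid. However, both halves of your proposal are programmatic rather than executed: you do not actually write down a candidate $3$-uniform word, and for the lower bound you only describe the shape of a chord-diagram argument while conceding that the forbidden-vertex-minor route via $W_5$ etc.\ is not directly available. A cleaner route for the lower bound, and one more in the spirit of what this paper itself does, is suggested by Theorem~\ref{mobnwr}: there the authors show the M\"{o}bius ladder $C(2n;a,n)$ is not $2$-word-representable by assuming a $2$-uniform representant, locating a letter with only neighbours between its two copies (via Observation~\ref{xx}), and then running a short case analysis on the relative order of those neighbours using Propositions~\ref{uv} and~\ref{rw}. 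The prism differs from the M\"{o}bius ladder only in how the two rim cycles are glued, so essentially the same word-level case analysis should go through for $Pr_n$ and would replace your chord-diagram argument with something concrete and short. For the upper bound, note that Theorem~\ref{3r} in this paper already builds an explicit $3$-uniform word for $C(2n;a,n)$ via a morphism $h(i)=i\,(i-a)_{2n}\,(i+n)_{2n}$; an analogous morphism tailored to the prism's adjacency would give you the explicit word you are missing.
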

	\begin{prop}[\cite{kitaev2015words}, Proposition 3.2.7]\label{uv}
		Let $w = uv$ be a $k$-uniform word representing a graph $G$, where $u$ and $v$ are two, possibly empty, words. Then, the word $w' = vu$ also represents $G$.
	\end{prop}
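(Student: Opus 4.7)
The plan is to reduce the problem to a pairwise alternation check and then exploit $k$-uniformity to show that the cyclic shift $uv \mapsto vu$ preserves alternation on every pair of letters. By Definition \ref{wr}, to prove that $w'$ represents $G$ it suffices to verify, for every pair of distinct letters $x,y \in V$, that $x$ and $y$ alternate in $w'$ if and only if $xy \in E$. Since $w$ already represents $G$, the latter is equivalent to $x$ and $y$ alternating in $w$. Hence the whole proposition collapses to a single statement: for every pair $x,y$, they alternate in $w = uv$ if and only if they alternate in $w' = vu$.

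Fix such a pair $x,y$ and let $w|_{xy}$ denote the subword obtained from $w$ by deleting every letter distinct from $x$ and $y$; set $\alpha := u|_{xy}$ and $\beta := v|_{xy}$, so that $w|_{xy} = \alpha\beta$ and $w'|_{xy} = \beta\alpha$. The critical use of $k$-uniformity is that both $x$ and $y$ occur exactly $k$ times in $w$, hence exactly $k$ times in $w|_{xy}$. Consequently, if $w|_{xy}$ alternates, it must have the form $(xy)^k$ or $(yx)^k$ (an alternating word with the same number of $x$'s and $y$'s must have even length $2k$ and begin with one of the two letters).

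I would now carry out a short case analysis on how the split $\alpha\beta$ sits inside $(xy)^k$, say. Write $|\alpha|_x$ and $|\alpha|_y$ for the number of $x$'s and $y$'s in $\alpha$. If $w|_{xy} = (xy)^k$, then either $|\alpha|_x = |\alpha|_y$, in which case $\alpha = (xy)^i$ and $\beta = (xy)^{k-i}$ and hence $\beta\alpha = (xy)^k$; or $|\alpha|_x = |\alpha|_y + 1$, in which case $\alpha = (xy)^i x$ and $\beta = y(xy)^{k-i-1}$, giving $\beta\alpha = y(xy)^{k-1}x = (yx)^k$. In both subcases $w'|_{xy}$ alternates. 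The case $w|_{xy} = (yx)^k$ is symmetric. Applying the same argument with the roles of $w$ and $w'$ swapped (noting that $w = u v$ is itself the $|v|$-th cyclic shift of $w' = vu$) supplies the converse direction.

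The only technical point is the bookkeeping in the case split; there is no deeper obstacle because the hypothesis of $k$-uniformity forces the letter counts to match on the full word, which is exactly what makes alternation a cyclic property. Without $k$-uniformity the argument fails, since an unequal number of $x$'s and $y$'s allows alternating prefixes that cease to alternate after a rotation, and this is where one would otherwise get stuck.
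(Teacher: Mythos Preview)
The paper does not supply its own proof of this proposition; it merely cites it from \cite{kitaev2015words}. Your argument is correct and is essentially the standard one: restrict to an arbitrary pair $\{x,y\}$, use $k$-uniformity to force $w|_{xy}\in\{(xy)^k,(yx)^k\}$ when the pair alternates, and check that every prefix--suffix swap of such a word is again of this form. The converse via symmetry (viewing $w$ as a cyclic shift of $w'$, which is itself $k$-uniform) is the right way to close the equivalence, and your remark that $k$-uniformity is what makes alternation a cyclic invariant is precisely the point.
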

	\begin{definition}[\cite{kitaev2015words}, Definition 3.0.13]
		The reverse of the word $w = w_1w_2 \ldots w_n$ is the word \\$r(w) = w_n \ldots w_2 w_1$.
	\end{definition}
	\begin{prop}[\cite{kitaev2015words}, Proposition 3.0.14]\label{rw}
		If $w$ is a word-representant of a graph $G$, then $r(w)$ also represents the graph $G$.
	\end{prop}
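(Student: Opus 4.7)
The plan is to prove Proposition \ref{rw} by a direct letter-by-letter verification, showing that the reversal operation commutes with the restriction-to-two-letters operation that defines alternation.

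First, I would fix two distinct letters $x,y \in V$ and denote by $w|_{\{x,y\}}$ the subword obtained from $w$ by deleting every occurrence of every letter other than $x$ and $y$. The central (purely combinatorial) observation is that reversal and restriction commute: $r(w)|_{\{x,y\}} = r\bigl(w|_{\{x,y\}}\bigr)$. This is immediate because both operations act position-wise, and deleting letters from $w_n w_{n-1}\cdots w_1$ gives the same sequence (read right-to-left) as deleting them from $w_1 w_2\cdots w_n$ and then reading the result right-to-left.

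Next, I would argue that a word $p$ over the two-letter alphabet $\{x,y\}$ is alternating (i.e., equals $xyxy\cdots$ or $yxyx\cdots$) if and only if no two adjacent letters in $p$ are equal; and this latter property is manifestly invariant under reversal, since the set of adjacent pairs in $r(p)$ is exactly the set of adjacent pairs of $p$ read in reverse order. Combining this with the commutation identity from the previous step yields: $x$ and $y$ alternate in $w$ if and only if they alternate in $r(w)$. The edge condition from Definition \ref{wr} then gives $xy\in E$ iff $x,y$ alternate in $r(w)$, so $r(w)$ represents $G$. One also has to note that $r(w)$ contains every letter of $V$ at least once, which is trivial since reversal preserves the multiset of letters.

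There is essentially no obstacle here; the only point requiring a moment of care is the commutation of reversal with restriction, and I would state it explicitly rather than leave it implicit. The proof is a one-paragraph verification, and I would present it as such.
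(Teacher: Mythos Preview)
Your argument is correct. The commutation of reversal with restriction to two letters is indeed immediate, and the characterisation of alternation as ``no two consecutive letters equal'' is exactly what makes the invariance under reversal transparent. Nothing is missing.

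As for comparison: the paper does not actually prove this proposition. It is quoted as background from \cite{kitaev2015words} (Proposition~3.0.14 there) and stated without proof in the preliminaries. The standard proof in the literature is essentially the one you give --- observe that $r(w)|_{\{x,y\}} = r(w|_{\{x,y\}})$ and that the reverse of an alternating two-letter word is again alternating --- so your approach is the expected one. Your only addition is making the commutation step explicit, which is a reasonable choice for clarity but not strictly necessary.
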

	
	For a word $w$, suppose that $\pi(w)$ is the permutation obtained from $w$ after removing all but its leftmost occurrence of each letter $x$. We call $\pi(w)$ as the \textit{initial permutation} of $w$. Similarly, suppose that $\sigma(w)$ is the permutation obtained from $w$ after removing all but its rightmost occurrence of each letter $x$. We call $\sigma(w)$ as the \textit{final permutation} of $w$. Furthermore, a word $w$ restricted to certain letters $x_1, \ldots, x_m$ is denoted by $w|_{\{x_1, \ldots, x_m\}}$. Let $A(w)$ denote the set of letters present in the word $w$. For instance, if $w = 35423214$, then $\pi(w) = 35421$, $\sigma(w) = 53214$, $w|_{\{1,2\}} = 221$, and $A(w) = \{1, 2, 3, 4, 5\}$.
	
	\begin{observation}[\cite{kitaev2008representable}, Observation 4]\label{pw}
		Let $w$ be a word-representant of $G$. Then $\pi(w)w$ also represents $G$.
	\end{observation}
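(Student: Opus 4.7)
The plan is to reduce the claim to a pairwise check, i.e., to show that for every pair of distinct letters $x,y \in V(G)$, $x$ and $y$ alternate in $\pi(w)w$ if and only if they alternate in $w$. This suffices because the edge relation of $G$ is read off pairwise from any word-representant via Definition~\ref{wr}, so preserving every pairwise alternation status means $\pi(w)w$ represents the same graph $G$.

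Fix a pair $x,y$ and, without loss of generality (swapping the roles of $x$ and $y$ if necessary), assume that the leftmost occurrence of $x$ in $w$ precedes the leftmost occurrence of $y$. Since $\pi(w)$ lists the first occurrences of the letters in the order in which they appear in $w$, we have $\pi(w)|_{\{x,y\}}=xy$, and since the first letter of $w|_{\{x,y\}}$ is $x$, we obtain
\[
(\pi(w)w)|_{\{x,y\}} \;=\; \pi(w)|_{\{x,y\}}\cdot w|_{\{x,y\}} \;=\; x\,y\cdot w|_{\{x,y\}}.
\]
Thus the whole question reduces to comparing alternation of the two-letter strings $w|_{\{x,y\}}$ and $xy\cdot w|_{\{x,y\}}$.

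I then split into two cases. If $xy\in E(G)$, then $x$ and $y$ alternate in $w$; since $w|_{\{x,y\}}$ starts with $x$, it must be one of $xyxy\cdots$ (ending in $x$ or $y$). Prepending $xy$ yields $xyxyxy\cdots$, still alternating, so alternation is preserved. If $xy\notin E(G)$, then $w|_{\{x,y\}}$ fails to alternate, so it contains a factor $xx$ or $yy$; this factor survives into $xy\cdot w|_{\{x,y\}}$, hence $x$ and $y$ do not alternate in $\pi(w)w$ either. The only subtlety to watch is the parity/starting letter, but this is forced by the choice made in the without-loss-of-generality step and is the reason the prepended $xy$ (rather than $yx$) does not accidentally create the forbidden factor $xx$. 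No harder ingredient is required; the statement is essentially bookkeeping about how $\pi(w)$ records the order of first occurrences.
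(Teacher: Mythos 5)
Your proof is correct and complete: the reduction to the two-letter restrictions, the identity $(\pi(w)w)|_{\{x,y\}}=\pi(w)|_{\{x,y\}}\cdot w|_{\{x,y\}}$, and the observation that $\pi(w)|_{\{x,y\}}$ necessarily agrees with the first letter order of $w|_{\{x,y\}}$ are exactly the points that make the statement work. The paper itself only cites this observation without proof, and your argument is the standard one from the cited source, so nothing further is needed.
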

	\begin{observation}[\cite{kitaev2008representable}, Observation 3]\label{xx}
		Let $w = w_1xw_2xw_3$ be a word representing a graph $G$, where $w_1$, $w_2$ and $w_3$ are possibly empty words, and $w_2$ contains no $x$. Let $X$ be the set of all letters that appear only once in $w_2$. Then $N_G(x) \subseteq X$.
	\end{observation}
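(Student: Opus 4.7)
The plan is to take an arbitrary $y \in N_G(x)$ and show that $y$ appears exactly once in $w_2$, which gives $y \in X$.

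First, I would exploit the key structural feature of the decomposition $w = w_1 x w_2 x w_3$: since by hypothesis $w_2$ contains no occurrence of $x$, the two displayed $x$'s are \emph{consecutive} occurrences of $x$ in $w$ (no other $x$ lies strictly between them). This is the only fact about the decomposition that will matter for the argument.

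Next, fix $y \in N_G(x)$. Since $w$ represents $G$ and $xy \in E$, Definition~\ref{wr} forces $x$ and $y$ to alternate in $w$, i.e., $w|_{\{x,y\}}$ is a word of the form $xyxy\cdots$ or $yxyx\cdots$. I would then count the occurrences of $y$ between the two displayed $x$'s. If $y$ did not occur in $w_2$, then in $w|_{\{x,y\}}$ we would see the factor $xx$, violating alternation; if $y$ occurred two or more times in $w_2$, we would see the factor $xyy$ (or longer), again violating alternation. Hence $y$ occurs exactly once in $w_2$, so $y \in X$, and the inclusion $N_G(x) \subseteq X$ follows.

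There is no genuine obstacle here; the only subtlety worth stating explicitly in the write-up is the observation of the first paragraph, that the absence of $x$ from $w_2$ makes the two shown $x$'s consecutive in $w$, so the alternation condition applied to the pair $\{x,y\}$ can be localized to $w_2$. Note also that the containment can be strict: letters of $X$ that do not alternate globally with $x$ (because of extra occurrences in $w_1$ or $w_3$) need not be neighbours of $x$, so the statement is a one-sided necessary condition and nothing more is claimed.
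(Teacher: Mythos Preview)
Your argument is correct and is exactly the standard one-line justification for this observation: since the two displayed $x$'s are consecutive and $y$ must alternate with $x$, exactly one $y$ falls between them. Note that the paper does not actually prove this statement---it is quoted as Observation~3 from \cite{kitaev2008representable}---so there is nothing further to compare against.
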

	\subsection{Known results about word-representability of circulant graphs}
	
	Word-representability of circulant graphs is studied by Kitaev and Pyatkin in \cite{kitaev2020semit}. They proved the following results regarding $4$-regular circulant graphs.
	
	\begin{theorem}[\cite{kitaev2020semit}, Theorem 7]\label{Circ} The circulant graph $C(13;1,5)$ is a $4$-chromatic $4$-regular semi-transitive graph of girth $4$. 
	\end{theorem}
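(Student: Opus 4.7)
The plan is to verify the four claimed properties---4-regularity, girth equal to 4, 4-chromaticity, and semi-transitive orientability---roughly in order of increasing difficulty. The 4-regularity is immediate from the definition: every vertex $i$ has exactly the neighbours $(i \pm 1)_{13}$ and $(i \pm 5)_{13}$, and $\{1, 5, 8, 12\}$ are four distinct residues modulo 13. For the girth, a triangle through vertex $0$ would require two elements of $\{\pm 1, \pm 5\}$ to sum (mod $13$) to a third element of the same set, and a short case check rules this out; conversely, $0{-}1{-}6{-}5{-}0$ is a visible 4-cycle (differences $+1, +5, -1, -5$).

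For the chromatic number, the upper bound $\chi \le 4$ follows from exhibiting an explicit proper 4-colouring, which can be found by hand exploiting the $\mathbb{Z}_{13}$-symmetry. The lower bound $\chi \ge 4$ is the first genuinely combinatorial step: I would fix the colour of vertex $0$ without loss of generality, attempt to propagate a proper 3-colouring along the $+1$-cycle $0{-}1{-}\cdots{-}12{-}0$ (whose odd length already forces the third colour to appear), and then use the $+5$-edges to derive a contradiction in every case distinction on where the third colour can sit. Equivalently, one may identify a small induced subgraph of $C(13;1,5)$ whose 3-colouring is already impossible.

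For the semi-transitive orientation I would exploit the triangle-freeness just established. In a triangle-free graph, a semi-transitive orientation is precisely an acyclic orientation with no directed path of length at least $2$ between the two endpoints of any edge: a shortcut of length $\ge 2$ would be forced to induce a transitive tournament on $\ge 3$ vertices, which contains a triangle. Thus the task reduces to picking a linear order $\prec$ on $\{0, 1, \ldots, 12\}$, orienting every edge from its $\prec$-smaller endpoint to its $\prec$-larger one (making acyclicity automatic), and verifying the no-shortcut property for each of the $26$ edges. This is a finite mechanical check once $\prec$ is fixed.

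The main obstacle is choosing a suitable order $\prec$: the natural order $0 < 1 < \cdots < 12$ already fails, because $0 \to 1 \to 2 \to 3 \to 4 \to 5$ shortcuts the edge $\{0,5\}$. A workable strategy is to relabel vertex $i$ as $(ci)_{13}$ for a carefully chosen multiplier $c$ (or by an ad hoc permutation) so as to interleave the $+1$-orbit with the $+5$-orbit and thereby break every long monotone run that could create a shortcut on a $\pm 1$ or $\pm 5$ edge; one then verifies the resulting 26 conditions directly. The secondary obstacle is the $\chi \ge 4$ step, which, although the graph has only 13 vertices, still demands a careful case analysis to rule out all $3$-colourings.
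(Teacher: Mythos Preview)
The paper does not contain a proof of this theorem: it is quoted verbatim as Theorem~7 of \cite{kitaev2020semit} in the subsection on known results, and no argument is supplied here. Consequently there is no ``paper's own proof'' against which to compare your attempt.

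That said, your plan is sound. The 4-regularity and the girth computation are correct as stated (your 4-cycle $0\text{--}1\text{--}6\text{--}5\text{--}0$ is valid, and the triangle check goes through since no two elements of $\{\pm1,\pm5\}$ sum to a third modulo~$13$). Your reduction of semi-transitivity in the triangle-free case to ``no directed $(\ge 2)$-path between adjacent vertices under some acyclic orientation'' is also correct, though your one-line justification is phrased a bit backwards: the point is that if such a path existed, it could not be completed to a transitive tournament (that would create a triangle), so it would necessarily be a shortcut. The genuine work, as you acknowledge, is locating a linear order that avoids all $26$ potential shortcuts and carrying out the $\chi\ge 4$ case analysis; you have correctly identified these as the two nontrivial steps but have not actually executed either, so what you have is a plan rather than a proof.
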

	\begin{lemma}[\cite{kitaev2020semit}, Lemma 9]\label{Circs} A circulant graph $C(n;1,2)$ is semi-transitive for each $n\ge 6$. 
	\end{lemma}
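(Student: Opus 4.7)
The plan is to exhibit an explicit acyclic orientation of $C(n;1,2)$ by a proper vertex colouring and then orient every edge from the smaller to the larger colour. The key structural observation driving the proof is that the clique number of $C(n;1,2)$ equals $3$ for $n\ge 5$, because four vertices pairwise within circular distance $2$ on $\mathbb{Z}_n$ would have to lie inside a window of length at most $3$. Since every acyclic tournament is automatically transitive, this clique bound reduces the verification of semi-transitivity to the single combinatorial requirement that no directed path of length at least $3$ has adjacent endpoints.

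Split into cases on $n\bmod 3$. When $n\equiv 0\pmod 3$, set $c(i)=i\bmod 3$; when $n\equiv 1\pmod 3$, set $c(i)=i\bmod 3$ for $0\le i\le n-2$ and $c(n-1)=3$; when $n\equiv 2\pmod 3$, set $c(i)=i\bmod 3$ for $0\le i\le n-3$, $c(n-2)=3$, and $c(n-1)=4$. The only places where the naive modulo-$3$ rule would fail are the wrap-around edges $\{0,n-1\}$ (if $n\equiv 1$) and $\{0,n-2\},\{1,n-1\}$ (if $n\equiv 2$), and these conflicts are removed by the reassigned colours. Orienting each edge from lower to higher colour therefore yields a well-defined acyclic digraph in which the colour strictly increases along every directed path.

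The orientation uses at most five colours, so every directed path contains at most five vertices. The three-colour case is thus vacuous. In the remaining cases, each top colour occurs at a single vertex (namely $n-1$, and additionally $n-2$ when $n\equiv 2\pmod 3$), so every candidate long directed path can be reconstructed by a backwards walk from the top: at each step the predecessor must be a neighbour of the current vertex carrying the prescribed lower colour, and the reassigned singleton colours in the tail $\{n-5,\ldots,n-1\}$ pin down this predecessor uniquely in every relevant subcase. A short arithmetic check confirms that each candidate path has endpoints at circular distance at least $3$ on $\mathbb{Z}_n$, hence non-adjacent as soon as $n\ge 6$.

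The main obstacle is the casework in the third regime $n\equiv 2\pmod 3$, where both length-$3$ and length-$4$ directed paths occur and five different colour subsequences for length-$3$ paths must be handled. Each subcase, however, reduces to the single arithmetic fact that neither $3$ nor $4$ lies in $\{1,2,n-1,n-2\}\pmod n$ for $n\ge 8$. Combining acyclicity, the absence of length-$\ge 3$ shortcuts, and the clique-number reduction produces a semi-transitive orientation of $C(n;1,2)$ for every $n\ge 6$, which completes the proof.
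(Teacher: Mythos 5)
Your proof is correct, and it is worth noting that the paper itself only quotes this lemma from Kitaev--Pyatkin without reproducing a proof, so there is no in-text argument to compare against; your write-up therefore stands as a self-contained justification. The two pillars of your argument both check out: (i) since $C(n;1,2)$ has no $K_4$ for $n\ge 6$ (four vertices pairwise at circular distance at most $2$ would have to be four consecutive vertices, and the extreme pair is then at distance $3$), every directed path on at least four vertices whose endpoints are adjacent is automatically a shortcut, and length-$2$ paths never are; and (ii) orienting along a proper colouring with colour classes $i\bmod 3$, patched at the one or two wrap-around vertices, makes colours strictly increase along directed paths, so only the explicitly enumerable paths ending at the singleton top colours need checking, and each of them has endpoints at circular difference $3$ or $4$, hence non-adjacent in the relevant range of $n$ (difference $4$ only occurs when $n\equiv 2\pmod 3$, i.e.\ $n\ge 8$, so adjacency at difference $4$ when $n=6$ never arises). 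Two small inaccuracies that do not affect the result: the clique number of $C(5;1,2)$ is $5$, not $3$ (it is $K_5$), so your parenthetical ``for $n\ge 5$'' should read $n\ge 6$; and your reduction implicitly uses the standard shortcut condition requiring arcs between \emph{all} pairs $0\le i<j\le m$, which is the intended reading of the paper's Definition even though its statement writes $1\le i<j\le m$. Compared with the generic lower-to-higher vertex orientation used elsewhere in this paper (Theorems 2.3 and 2.4), which fails for $C(n;1,2)$ because of long monotone paths, your colouring-based orientation buys a bounded path length for free, which is what makes the verification finite; this is in the same spirit as the block-of-three constructions in the cited source.
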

	\begin{theorem}[\cite{kitaev2020semit}, Theorem 8]\label{Circ2} Each $4$-regular circulant graph is semi-transitive. 
	\end{theorem}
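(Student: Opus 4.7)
The plan is to combine disjoint-union and multiplier-isomorphism reductions to bring any $C(n;a_1,a_2)$ into the canonical shape $C(n;1,k)$, and then exhibit an explicit semi-transitive orientation of each such graph, with Lemma~\ref{Circs} supplying the seed case $k=2$.

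\emph{Reductions.} Let $d=\gcd(a_1,a_2,n)$. By Theorem~\ref{con}, if $d>1$ the graph $C(n;a_1,a_2)$ is disconnected, and a direct computation shows it splits into $d$ disjoint copies of $C(n/d;a_1/d,a_2/d)$. Since a disjoint union of semi-transitively orientable graphs is clearly semi-transitively orientable (orient each component independently), we may assume $d=1$. Next, for any $t$ coprime to $n$, the map $x\mapsto tx\pmod n$ induces a graph isomorphism $C(n;a_1,a_2)\to C(n;(ta_1)_n,(ta_2)_n)$, where each image exceeding $n/2$ is replaced by its negation modulo $n$. Whenever $\gcd(a_i,n)=1$ for some $i$, taking $t=a_i^{-1}$ reduces the graph to $C(n;1,k)$ for a suitable $k$. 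The residual case in which both $\gcd(a_1,n)>1$ and $\gcd(a_2,n)>1$ while $\gcd(a_1,a_2,n)=1$ can be settled by a Chinese-remainder multiplier, or alternatively by orienting the graph directly using the cycle decomposition produced by each generator.

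\emph{Core construction.} The heart of the proof is therefore to produce a semi-transitive orientation of $C(n;1,k)$ for every pair $(n,k)$ with $1<k<n/2$. For $k=2$ this is Lemma~\ref{Circs}. For larger $k$ I would attempt a layered orientation: partition $\{0,1,\ldots,n-1\}$ into consecutive blocks whose lengths are tuned to $k$, orient every edge so that block indices weakly increase (forcing acyclicity via a block-potential function), and fix the internal orientation inside each block so that every directed path $v_0\to v_1\to\cdots\to v_m$ for which $v_0v_m$ is an arc already has all pairs $v_iv_j$ joined by arcs. Because the graph is $4$-regular, each internal vertex on any directed path has out-degree at most four, so the catalogue of potential shortcut configurations to rule out is small and can be enumerated by hand.

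\emph{Main obstacle.} The principal difficulty lies in the core construction. When $\gcd(n,k)$ is large, the $k$-generator creates short cycles which severely constrain admissible orientations; and when $n$ is only marginally larger than $2k$, short shortcut paths threaten to create violating triples. The sporadic example $C(13;1,5)$ handled separately in Theorem~\ref{Circ} is evidence that no single uniform recipe can cover every $(n,k)$. I therefore expect the proof to isolate a finite list of exceptional small cases to be treated by ad hoc orientations, with the layered construction applying uniformly to all sufficiently large $n$ thereafter.
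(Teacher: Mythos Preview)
The theorem you are addressing is not proved in this paper at all: it is quoted from Kitaev and Pyatkin as background (Theorem~8 of \cite{kitaev2020semit}), so there is no in-paper proof against which to compare your attempt.

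Turning to the proposal itself, it is a plan rather than a proof, and two of its steps contain genuine gaps. First, in your ``residual case'' where $\gcd(a_1,n)>1$, $\gcd(a_2,n)>1$, and $\gcd(a_1,a_2,n)=1$, no multiplier $t$ coprime to $n$ can send either $a_i$ to $\pm 1\pmod n$, so a ``Chinese-remainder multiplier'' cannot possibly reduce such a graph to the form $C(n;1,k)$. A concrete obstruction is $C(12;2,3)$: the units modulo $12$ are $\{1,5,7,11\}$, and each of them maps the set $\{2,3\}$ back to $\{2,3\}$ after normalisation. Your fallback (``orient directly using the cycle decomposition'') is not an argument, it is a restatement of the problem. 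Second, the ``core construction'' for $C(n;1,k)$ is purely heuristic: you specify neither the block sizes nor the internal orientations, and you do not carry out the promised enumeration of shortcut configurations. Your reading of Theorem~\ref{Circ} as evidence that no uniform recipe exists is also a misreading: $C(13;1,5)$ is singled out in \cite{kitaev2020semit} because it is a $4$-chromatic example of interest, not because it resists the general method used there to prove Theorem~\ref{Circ2}.
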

	
	Further, they have given an example of a non-semi-transitive circulant graph, i.e., $C(14;1,3,4,5)$. Hence, a circulant graph may not be semi-transitive. The following section presents results regarding semi-transitive and non-semi-transitive circulant graphs.
	
	\section{Semi-transitive orientability of circulant graphs}\label{sem}
	A \textit{circulant graph} $C(n; a_1, a_2, \ldots, a_k)$ is a graph with the vertex set $\{0, 1, \ldots, n-1\}$ and an edge set $\{ij ~ | ~ (i - j) \pmod n$ or $(j-i) \pmod n$ are in $\{a_1, a_2, \ldots, a_k\}\}$, where $ 0 < a_1 < a_2 < \ldots < a_k < (n+1)/2$.
	
	\begin{theorem}[\cite{HEUBERGER2003153}, Theorem 1]
		Let $G \cong C(n; a_1, a_2, \ldots, a_k)$ be a connected circulant graph. Then, $G$ is bipartite if and only if $a_1, a_2, \ldots, a_k$ are odd and $n$ is even.
	\end{theorem}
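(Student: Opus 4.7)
The plan is to split the biconditional into two directions and attack each with a short argument that combines the group structure of $\mathbb{Z}/n$ with the circulant edge relation.

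For the easy direction ($\Leftarrow$), I would partition the vertex set $\{0,1,\ldots,n-1\}$ by parity. Two things need to be verified: that parity is well-defined on $\mathbb{Z}/n$ (which is exactly the assumption that $n$ is even), and that every edge crosses the partition (which uses that each generator $a_i$ is odd). Since each edge $ij$ satisfies $i - j \equiv \pm a_l \pmod n$ for some $l$, reducing mod an even $n$ preserves parity and adding an odd $a_l$ toggles it, so the partition is a proper $2$-coloring.

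For the harder direction ($\Rightarrow$), I would start from an arbitrary proper $2$-coloring $f\colon \mathbb{Z}/n \to \{0,1\}$ normalised by $f(0)=0$. The edge rule forces $f(v+a_i) = 1 - f(v)$, hence $f(v+2a_i) = f(v)$, for all $v$ and all $i$. Thus $f$ is constant on the cosets of the subgroup $H = \langle 2a_1, \ldots, 2a_k\rangle$ of $\mathbb{Z}/n$. Because $G$ is connected, Theorem~\ref{con} gives $\gcd(n,a_1,\ldots,a_k) = 1$, so $\langle a_1,\ldots,a_k\rangle = \mathbb{Z}/n$; applying multiplication by $2$ to the generating set yields $H = 2\mathbb{Z}/n$. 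Two subcases then close the argument: if $n$ is odd then $2$ is invertible mod $n$, so $H = \mathbb{Z}/n$ and $f$ is constant, contradicting that $f$ takes both values; hence $n$ must be even. Once $n$ is even, $H$ is the index-$2$ subgroup of even residues, and each edge $\{0,a_i\}$ forces $f(a_i) = 1 \neq f(0)$, so $a_i \notin H$, which means $a_i$ is odd.

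The step I expect to be the main obstacle is the identification $H = 2\mathbb{Z}/n$: this is the one place where the combinatorial hypothesis (connectedness) feeds into the algebra, via the standard fact that the subgroup of $\mathbb{Z}/n$ generated by $a_1,\ldots,a_k$ has order $n/\gcd(n,a_1,\ldots,a_k)$. A tempting alternative would be to exhibit an explicit odd closed walk whenever the hypotheses fail, but this is awkward: when $n$ is even and some $a_j$ is even, the cycle produced by $a_j$ alone can still have even length (for instance in $C(8;1,2)$ the generator $2$ yields $4$-cycles, and one must instead combine steps such as $+1,+1,-2$ to see the triangle $0,1,2,0$), so the elementary approach would require several cases. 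The group-theoretic route above bypasses this case analysis entirely.
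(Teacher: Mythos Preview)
Your argument is correct in both directions. The identification $H=\langle 2a_1,\ldots,2a_k\rangle=2\mathbb{Z}/n$ goes through exactly as you describe: each $2a_i$ lies in $2\mathbb{Z}/n$, and conversely every element of $2\mathbb{Z}/n$ is $2\sum c_ia_i=\sum c_i(2a_i)\in H$ since $\langle a_1,\ldots,a_k\rangle=\mathbb{Z}/n$ by connectedness. The only point you leave slightly implicit is why $f$ cannot be constant when $n$ is odd; but this is immediate, since the edge $\{0,a_1\}$ already forces $f(a_1)\neq f(0)$.

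However, there is nothing in the paper to compare your proposal against: the statement is quoted from \cite{HEUBERGER2003153} as an external result and is not proved here. The paper merely uses it to deduce the transitivity corollary that follows. So while your proof is sound and self-contained, the comparison you were asked to make is vacuous in this instance.
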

	Since bipartite graphs are transitive, we get the following result as a corollary.
	\begin{corollary}
		$C(2n; a_1, a_2, \ldots, a_k)$ is transitive if $a_i$ is odd for all $1 \le m \le k$.
	\end{corollary}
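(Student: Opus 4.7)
The plan is to give a direct parity-based bipartition of the vertex set and exhibit an explicit transitive orientation, rather than routing everything through the connectedness hypothesis of the preceding theorem. Because $C(2n;a_1,\ldots,a_k)$ has an even number of vertices, I would set $V_0=\{0,2,\ldots,2n-2\}$ and $V_1=\{1,3,\ldots,2n-1\}$ and verify that this is a bipartition whenever every $a_i$ is odd.

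To verify the bipartition, I would take an arbitrary edge $ij\in E$ and recall that, by definition, $(i-j)_{2n}\in\{a_1,\ldots,a_k\}$ or $(j-i)_{2n}\in\{a_1,\ldots,a_k\}$. Since $2n$ is even, reducing modulo $2n$ preserves parity, so either difference must be odd. Consequently $i$ and $j$ have opposite parities, showing that every edge joins $V_0$ to $V_1$ and that $C(2n;a_1,\ldots,a_k)$ is bipartite. This handles both the connected and disconnected situations uniformly, so I do not have to worry about $\gcd(2n,a_1,\ldots,a_k)>1$; the cited theorem is invoked only to motivate the bipartiteness conclusion.

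Next I would orient every edge from its $V_0$-endpoint to its $V_1$-endpoint. In the resulting digraph there are no directed paths of length two whatsoever, since every arc starts in $V_0$ and ends in $V_1$, and no arc leaves $V_1$. Acyclicity is therefore immediate, and the semi-transitivity condition on paths $v_0\to v_1\to\cdots\to v_m$ with $m\ge 2$ holds vacuously; in fact the orientation is transitive, so $C(2n;a_1,\ldots,a_k)$ is a comparability graph.

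There is no real obstacle here: the only subtlety is confirming that the author's use of the word \emph{transitive} matches the standard notion of admitting a transitive orientation (i.e., being a comparability graph), which is stronger than semi-transitive and which is precisely what the $V_0\to V_1$ orientation delivers. Once that is clarified the corollary is essentially a one-line consequence of bipartiteness.
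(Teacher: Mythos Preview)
Your argument is correct and follows the same route as the paper, which simply records that bipartite graphs admit a transitive orientation and reads off the corollary from Heuberger's theorem. Your direct parity bipartition and the explicit $V_0\to V_1$ orientation are exactly the standard proof that bipartite graphs are comparability graphs; the one small advantage of writing it out as you do is that it applies verbatim when $\gcd(2n,a_1,\ldots,a_k)>1$, whereas the cited theorem is stated only for connected circulants.
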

	
	In \cite{kitaev2020semit}, Kitaev and Pyatkin have posed a problem that asks whether  $C(n; t, t+1, \ldots, k)$ for some integers $k$ and $t$ satisfying $k - t > 1$ is semi-transitive or not. The following result answers the problem negatively.
	\begin{theorem}
		$C(n; t, t+1, \ldots, 2t)$ is not semi-transitive for $2 < \frac{n+1}{5} \le t \le \frac{n-1}{4}$
	\end{theorem}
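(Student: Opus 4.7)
I proceed by contradiction. Suppose $G = C(n; t, t+1, \ldots, 2t)$ admits a semi-transitive orientation $\sigma$. The hypothesis $2 < (n+1)/5 \le t \le (n-1)/4$ rewrites as $t \ge 3$ and $4t+1 \le n \le 5t-1$, so that $R = \{t, t+1, \ldots, 2t\}$ contains at least four consecutive integers.

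First I would isolate the local obstruction. Set $a_i := it \pmod n$ for $i \in \{-3, \ldots, 3\}$; the seven $a_i$'s are distinct since $3t < n \le 5t-1 < 6t$, and a direct modular check shows: (i) each consecutive $4$-tuple $\{a_i, a_{i+1}, a_{i+2}, a_{i+3}\}$ for $i = -3, -2, -1, 0$ induces a $4$-clique in $G$ (the pairwise distances are $t, 2t, 3t$; the first two lie in $R$, and $n-3t \in [t+1, 2t-1] \subset R$ handles the third); (ii) $\{a_{-3}, a_3\}$ is an edge, because its distance $6t - n$ lies in $[t+1, 2t-1] \subset R$; but (iii) the five pairs
\[
\{a_{-2}, a_2\},\ \{a_{-2}, a_3\},\ \{a_{-1}, a_3\},\ \{a_{-3}, a_1\},\ \{a_{-3}, a_2\}
\]
are all \emph{non-edges}, their circular distances belonging to $\{n-4t,\, 5t-n\} \subseteq [1, t-1]$, disjoint from $R$.

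Second, I would combine the four transitive tournaments induced by $\sigma$ on $K^{(i)} := \{a_i, a_{i+1}, a_{i+2}, a_{i+3}\}$ for $i = -3, -2, -1, 0$. Adjacent $K^{(i)}$'s overlap in a triangle, so their linear orders glue into a partial order on $\{a_{-3}, \ldots, a_3\}$ characterised by its ascent/descent pattern along the chain. After normalising by the reversal symmetry $\sigma \mapsto \sigma^{-1}$, I would case-split on this shape. In the monotone case $a_{-3} \to a_{-2} \to \cdots \to a_3$, the directed path
\[
a_{-3} \to a_{-2} \to a_0 \to a_3
\]
(whose arcs are forced by the transitivity of $K^{(-3)}$ and $K^{(0)}$) has adjacent endpoints $a_{-3}, a_3$, but the intermediate pair $\{a_{-2}, a_3\}$ is a non-edge, so the path is a shortcut. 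The non-monotone cases are handled similarly: the turning point of the chain together with the forced transitive orderings on the $K^{(i)}$'s always produces a directed path of length $\ge 3$ between some adjacent pair that traverses one of the five non-edges listed in (iii). A few sub-cases further require an auxiliary vertex $c \in \{t+1, \ldots, 2t-1\} \setminus \{n-3t\}$ (which exists because $t \ge 3$), whose adjacency pattern $c \sim a_0, a_3$ and $c \not\sim a_1, a_2$ mirrors that of $a_{-3}$, to supply the desired shortcut witness.

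Any shortcut contradicts the semi-transitivity of $\sigma$, completing the proof. The main technical obstacle is the case analysis in step two: for certain non-monotone orderings the shortcut is not visible on $\{a_{-3}, \ldots, a_3\}$ alone and must be pulled across via the auxiliary vertex $c$, and carrying this out uniformly over all shapes requires careful bookkeeping of the three overlapping triangles shared by the four $K^{(i)}$'s. The inequalities $4t + 1 \le n \le 5t - 1$ are used crucially to place both $n-4t$ and $5t-n$ inside $[1, t-1]$ (producing the non-edges in (iii)) while simultaneously placing $6t - n$ in $[t+1, 2t-1]$ (producing the edge in (ii)); the condition $t \ge 3$ gives the existence of $c$.
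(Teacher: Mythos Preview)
Your approach is considerably more involved than the paper's, and it has a genuine gap. The paper dispenses with orientations altogether: it simply exhibits an induced copy of the wheel $W_5$ on the six vertices $\{0,\,t-1,\,t,\,2t-1,\,2t+1,\,n-t\}$ (hub $n-t$), verifies the pairwise distances by a small table, and invokes the hereditary property together with the fact that $W_5$ is not semi-transitive.

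Your seven-vertex configuration $H$ on $\{a_{-3},\dots,a_3\}$, by contrast, \emph{is} semi-transitive, so no contradiction can be extracted from it alone. Indeed, $a_0$ is universal in $H$, and $H-a_0$ is the complement of the six-path $a_1,a_{-3},a_2,a_{-2},a_3,a_{-1}$ (these are exactly your five non-edges). Since $\overline{P_6}$ is a comparability graph, any transitive orientation of $H-a_0$ together with $a_0$ made a source gives a semi-transitive orientation of $H$; in this orientation $a_0$ is the unique source and both $a_3$ and $a_{-1}$ are sinks. Your case split on ``ascent/descent patterns'' therefore cannot close on $H$.

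The auxiliary vertex $c$ does not repair this in general. In the orientation just described one is forced into $a_0\to c\to a_3$, and whether $c$ is adjacent to $a_{-1}$ or $a_{-2}$ depends on where $c$ sits relative to $n-3t$; you do not control this. Concretely, take $n=4t+1$: then every admissible $c\in\{t+2,\dots,2t-1\}$ satisfies $c\sim a_{-1}$ and $c\not\sim a_{-2}$, and orienting $c\to a_{-1}$ extends the above orientation of $H$ to a semi-transitive orientation of the full eight-vertex subgraph $H\cup\{c\}$ (the only new directed paths are $a_0\to c\to a_3$ and $a_0\to c\to a_{-1}$, both transitive triangles). So no shortcut is visible on your eight vertices, and the sketched case analysis cannot be completed as stated. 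Salvaging this route would require further auxiliary vertices and a substantially finer analysis; the paper's induced-$W_5$ argument avoids all of this.
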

	\begin{proof}
		By the definition of a circulant graph, two vertices $i$ and $j$ are adjacent if and only if $(i-j) \pmod n$ or $(j-i) \pmod n$ are in $\{t, t+1, \ldots, 2t\}$. Thus, $i$ and $j$ are adjacent if and only if either of the following cases holds for all $i > j$. \begin{enumerate}
			\item $t \le |i-j| \le 2t$
			\item $n - 2t \le |i-j| \le n - t$.
		\end{enumerate} 
		
		Consider an induced subgraph, $H$, with vertices $\{0, t-1, t, 2t-1, 2t+1, n-t\}$. We claim that $H$ is isomorphic to $W_5$. According to \textit{Remark }\ref{r1}, this implies that the graph $C(n; t, t+1, \ldots, 2t)$ is not semi-transitive.\\
		
		Consider the following table $M = \{m_{ij}\}$ for all $i, j \in V(H)$, where $m_{ij} = |i - j|$
		\begin{center}
			\begin{tabular}{|c|c|c|c|c|c|c|}
				\hline
				\textbf{i/j}& \textbf{0} & \textbf{t-1} & \textbf{t} & \textbf{2t - 1} & \textbf{2t +1} & \textbf{n-t}\\\hline
				\textbf{0} & 0& t - 1 & t & 2t - 1 & 2t + 1 & n - t \\\hline	
				\textbf{t-1} & t - 1 & 0 & 1& t & t + 2 & n - 2t + 1 \\\hline	
				\textbf{t} & t & 1& 0 & t - 1 & t + 1 & n - 2t\\\hline
				\textbf{2t-1} & 2t - 1 & t & t - 1 & 0 & 2 & n - 3t + 1\\\hline
				\textbf{2t +1} & 2t + 1 & t + 2 & t + 1 & 2 & 0 & n - 3t - 1 \\\hline
				\textbf{n-t }& n - t & n - 2t + 1 & n - 2t & n - 3t +1 & n - 3t - 1 & 0\\\hline
			\end{tabular}
		\end{center}
		Since $2 < \frac{n+1}{5} \le t \le \frac{n-1}{4}$, we have $t < n - 3t + 1 \le 2t$ and $t \le n - 3t - 1 < 2t$. Hence, the adjacency matrix, $\{h_{ij}\}$, of the induced subgraph $H$ can be written as follows: 
		$$\{h_{ij}\} =\begin{cases}
			1 & \text{if } t \le m_{ij} \le2t \text{ or } n - 2t \le m_{ij} \le n - t,\\
			0 & \text{otherwise}.
		\end{cases} = \begin{pmatrix}
			0 & 0 & 1 & 1 & 0 & 1\\
			0 & 0 & 0 & 1 & 1 & 1\\
			1 & 0 & 0  & 0 & 1 & 1\\
			1 & 1 & 0 & 0 & 0 & 1\\
			0 & 1 & 1 & 0 & 0 & 1\\
			1 & 1 & 1 & 1 & 1 & 0
		\end{pmatrix}$$
		Hence, the induced subgraph, $H$, formed by the above adjacency matrix, is isomorphic to $W_5$, as shown in \textit{Figure }\ref{hw5}.
	\end{proof}
	\tikzset{every picture/.style={line width=0.75pt}} 
	\begin{figure}[h]
		\centering
		\begin{tikzpicture}[x=0.75pt,y=0.75pt,yscale=-0.75,xscale=0.75]
			\filldraw (336.6,98.65) circle (0.1cm);
			\filldraw  (395.02,141.03) circle (0.1cm);
			\filldraw  (372.84,209.65) circle (0.1cm);
			\filldraw  (300.7,209.68) circle (0.1cm);
			\filldraw (278.3,141.07) circle (0.1cm);
			\filldraw  (336.69,160.02) circle (0.1cm);
			\draw   (336.6,98.65) -- (395.02,141.03) -- (372.84,209.65) -- (300.7,209.68) -- (278.3,141.07) -- cycle ;
			\draw    (336.6,98.65) -- (336.69,160.02) ;
			\draw    (395.02,141.03) -- (336.69,160.02) ;
			\draw    (372.84,209.65) -- (336.69,160.02) ;
			\draw    (300.7,209.68) -- (336.69,160.02) ;
			\draw    (278.3,141.07) -- (336.69,160.02) ;
			
			\draw (332,77) node [anchor=north west][inner sep=0.75pt]   [align=center] {0};
			\draw (400,132) node [anchor=north west][inner sep=0.75pt]   [align=center] {t};
			\draw (374,221) node [anchor=north west][inner sep=0.75pt]   [align=center] {2t+1};
			\draw (280,221) node [anchor=north west][inner sep=0.75pt]   [align=center] {t-1};
			\draw (240,132) node [anchor=north west][inner sep=0.75pt]   [align=center] {2t-1};
			\draw (323,175) node [anchor=north west][inner sep=0.75pt]   [align=center] {n-t};

		\end{tikzpicture}
		\caption{Induced subgraph $H \cong W_5$}
		\label{hw5}
	\end{figure}
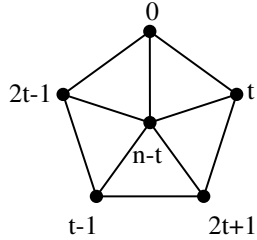
	
	The previous result shows that $C(n; t, t+1, \ldots, 2t)$ is not semi-transitive for $2 < \frac{n+1}{5} \le t \le \frac{n-1}{4}$. Hence, a natural question arises regarding semi-transitive orientability of $C(n; t, t+1, \ldots, 2t)$ for $t > \frac{n-1}{4}$. Interestingly, the next result positively answers this question for a more general case.
	\begin{theorem}\label{n4}
		$C(n; a_1, a_2, \ldots, a_k)$ is semi-transitive for all $a_1 \ge \frac{n+1}{4}$
	\end{theorem}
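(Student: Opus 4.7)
The plan is to exhibit a simple linear-order orientation and show that the hypothesis $a_1 \ge (n+1)/4$ forces every directed path of length at least three to be too ``wide'' to admit a shortcut.

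First I would orient each edge $ij$ of $C(n; a_1, \ldots, a_k)$ from $\min(i,j)$ to $\max(i,j)$ using the natural linear order on $\{0, 1, \ldots, n-1\}$. Acyclicity is immediate because this orientation respects a strict total order, so every directed path is strictly increasing in vertex labels. Writing $A = \{a_1, \ldots, a_k\}$, two vertices $i < j$ in $\{0, \ldots, n-1\}$ are adjacent iff $j - i \in A \cup (n - A)$. Since $a_k < (n+1)/2$ gives $a_k \le n/2$, and $a_1 \le n/2$ gives $n - a_1 \ge n/2$, we have $A \cup (n - A) \subseteq [a_1, n - a_1]$; in particular the ``forward length'' $j - i$ of any edge satisfies $a_1 \le j - i \le n - a_1$.

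The crux is the shortcut check. For a directed path $v_0 \to v_1 \to \cdots \to v_m$ with $m \ge 3$, monotonicity gives
\[
v_m - v_0 \;=\; \sum_{\ell = 0}^{m-1} (v_{\ell+1} - v_\ell) \;\ge\; m\, a_1 \;\ge\; 3 a_1.
\]
The hypothesis $a_1 \ge (n+1)/4$ yields $4 a_1 \ge n + 1$, and therefore $3 a_1 \ge (n+1) - a_1 > n - a_1$, which strictly exceeds the maximum possible edge length. Hence $v_0$ and $v_m$ cannot be adjacent, so the shortcut condition is vacuously satisfied for every $m \ge 3$. For $m = 2$ the condition is automatic: if $v_0 v_2 \in E$ then it is oriented $v_0 \to v_2$, and together with $v_0 \to v_1 \to v_2$ this forms a transitive triangle.

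The main ``obstacle'' is really just identifying the right orientation; the analytic content reduces to the one-line inequality $4 a_1 \ge n + 1 \Rightarrow 3 a_1 > n - a_1$, which is precisely the threshold encoded in the hypothesis.
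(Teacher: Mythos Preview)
Your proof is correct and follows essentially the same strategy as the paper: orient all edges from smaller to larger label, then use the lower bound $a_1$ on each step to conclude that after three steps the total displacement exceeds the maximum possible edge length, so no shortcutting arc $v_0 \to v_m$ can exist for $m \ge 3$. Your formulation with the bound $a_1 \le j-i \le n-a_1$ and the one-line inequality $4a_1 \ge n+1 \Rightarrow 3a_1 > n-a_1$ is in fact slightly cleaner than the paper's version (which tracks the two intervals $[a_1,a_k]$ and $[n-a_k,n-a_1]$ separately via $(n+1)/4$ and $(3n-1)/4$), and you also make explicit the trivial $m=2$ case that the paper leaves implicit.
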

	\begin{proof}
		Consider a circulant graph $G = C(n; a_1, a_2, \ldots, a_k)$ with the vertex set $\{0, 1, \ldots, n-1\}$. Orient the edge $ij \in E(G)$ as $i \rightarrow j$ for all $i < j$, where $i, j \in V(G)$. We claim that the given orientation is semi-transitive. It is easy to see that the orientation is acyclic. Suppose that there is a shortcut $v_0 \rightarrow v_1 \rightarrow \ldots \rightarrow v_m$ with a shortcutting arc $v_0 \rightarrow v_m$ where $m \ge 3$. 
		
		Note that if $i \rightarrow j$, then $ j > i$ and $j \in \{i + a_1, \ldots, i + a_k, i + n - a_k, \ldots, i + n - a_1 \}$. Since $ a_1 \ge \frac{n +1}{4}$ and $a_k < \frac{n+1}{2}$, we have either $i + \frac{n+1}{4} \le j < i + \frac{n + 1}{2}$ or $ i + \frac{n - 1}{2} < j \le i + \frac{3n - 1}{4}$.
		
		Now since $v_l \rightarrow v_{l+1}$ for $l = \{0, 1, \ldots, m-1\}$, we have either $v_l + \frac{n+1}{4} \le v_{l+1} < v_l + \frac{n + 1}{2}$ or $ v_l + \frac{n - 1}{2} < v_{l+1} \le v_l + \frac{3n - 1}{4}$.
		
		From this we can see that $v_{l+1} \ge v_l + \frac{n+1}{4}$ for $l = \{0, 1, \ldots, m-1\}$. By solving this recursion, we get $v_{l} \ge v_0 + \frac{l(n+1)}{4}$. Since $m \ge 3$, we have $v_m \ge v_0 + \frac{3(n+1)}{4}$. 
		
		Since $v_0 \rightarrow v_m$, we have $v_m \le v_0 + \frac{3n - 1}{4} < v_0 + \frac{3(n+1)}{4} \le v_m$, which is a contradiction. Hence, the given orientation is semi-transitive.
	\end{proof}
	\begin{theorem}
		$C(n; t, t+1, \ldots, \lfloor \frac{n}{2} \rfloor)$ is semi-transitive $\forall ~ t = 1, 2, \ldots,  \lfloor \frac{n}{2} \rfloor$
	\end{theorem}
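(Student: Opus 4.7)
My plan is to use the same linear orientation that appeared in the proof of Theorem \ref{n4}: for every edge $ij$ with $i<j$, orient it as $i\to j$. This is immediately acyclic, so the only thing to verify is the absence of shortcuts. The whole argument will ride on a single structural observation, which I would establish first: in $C(n; t, t+1, \ldots, \lfloor n/2 \rfloor)$, two vertices $i<j$ are adjacent if and only if the linear difference $j-i$ lies in the \emph{contiguous interval} $[t,\, n-t]$. This follows because the two candidates $(j-i)\bmod n = j-i$ and $(i-j)\bmod n = n-(j-i)$ together sweep out exactly $\{t, t+1, \ldots, n-t\}$ once we require one of them to sit in $\{t, \ldots, \lfloor n/2 \rfloor\}$.

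Once this characterisation is in hand, the semi-transitivity check is short. I would take an arbitrary directed path $v_0\to v_1\to\cdots\to v_m$ with $m\ge 2$ together with a chord arc $v_0\to v_m$. By the orientation rule, $v_0<v_1<\cdots<v_m$, each consecutive difference $v_{l+1}-v_l$ lies in $[t, n-t]$, and $v_m-v_0\in [t, n-t]$. For any $0\le i<j\le m$, the difference $v_j-v_i$ is a sum of consecutive positive increments each of size at least $t$, so $v_j-v_i\ge t$; and trivially $v_j-v_i\le v_m-v_0\le n-t$. Hence $v_j-v_i\in [t, n-t]$, so $v_iv_j$ is an edge. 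The induced subgraph on $\{v_0,\ldots,v_m\}$ is therefore a transitive tournament, ruling out any shortcut.

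I do not anticipate a real obstacle: the statement is essentially a clean consequence of the fact that, because the connection set is an interval of consecutive integers, the set of admissible edge-differences is closed under taking sub-sums whenever those sub-sums stay below the cap $n-t$ enforced by the chord. This also neatly recovers Theorem \ref{n4} in the subcase $t\ge (n+1)/4$, where the upper bound on $v_m-v_0$ is so restrictive that $m\le 2$ is forced a priori; for the smaller values of $t$ covered by the present theorem we allow longer paths, but every pairwise difference along such a path still falls into $[t, n-t]$ by the same monotone-interval argument.
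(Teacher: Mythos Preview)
Your proposal is correct and is essentially the same argument as the paper's own proof: the paper uses the identical linear orientation $i\to j$ for $i<j$, records the same adjacency characterisation $j\in\{i+t,\ldots,i+n-t\}$, and then shows that along any directed path with chord $v_0\to v_m$ every pair $v_x,v_y$ satisfies $t\le v_y-v_x\le n-t$, forcing a transitive tournament. Your write-up is in fact slightly cleaner about the inequalities than the paper's version.
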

	\begin{proof}
		Consider a circulant graph $G = C(n; t, t+1, \ldots, \lfloor \frac{n}{2} \rfloor)$ with the vertex set $\{0, 1, \ldots, n-1\}$. Orient the edge $ij \in E(G)$ as $i \rightarrow j$ for all $i < j$, where $i, j \in V(G)$. We claim that the given orientation is semi-transitive. It is easy to see that the orientation is acyclic. Suppose that there is a shortcut $v_0 \rightarrow v_1 \rightarrow \ldots \rightarrow v_m$ with a shortcutting arc $v_0 \rightarrow v_m$ where $m \ge 3$. 
		
		Note that if $i \rightarrow j$, then $j > i$ and $j \in \{i + t, \ldots, i + n - t\}$. Now since $v_l \rightarrow v_{l+1}$ for $l = \{0, 1, \ldots, m-1\}$, we have $ v_l + t \le v_{l+1} \le v_l  + n - t$. Hence, by solving this recursion, we get $ v_0 + lt \le v_l \le v_0 + l(n - t)$.
		
		Consider $v_x$ and $v_y$, where $0 \le x < y \le m$. Since $v_0 \rightarrow v_m$, we have $v_0 + t \le v_m \le v_0 + n - t$. Hence, we have, $v_x + t < v_{x+1} \le v_y < v_m \le v_0 + n - t < v_x + n - t$. As a result, $v_x \rightarrow v_y$ for all $0 \le x < y \le m$. Hence, the vertices $\{v_0, v_1, \ldots, v_m\}$ induce a clique, which is a contradiction. Therefore, the given orientation is semi-transitive.
	\end{proof}
	\section{Representation number of some circulant graphs}\label{rep}
	
	In \cite{kitaev2020semit}, Kitaev and Pyatkin have posed a problem that asks whether  $C(n; 1, 2, \ldots, k)$ is semi-transitive or not. The following result answers the problem positively and gives its representation number.
	\begin{theorem}
		$C(n; 1, 2, \ldots, k)$ is $2$-word-representable.
	\end{theorem}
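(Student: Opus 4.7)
The plan is to exhibit an explicit $2$-uniform word $w$ that represents $G = C(n;1,2,\ldots,k)$. The case $k = n/2$ (only possible when $n$ is even) gives $G \cong K_n$, which is 2-represented by $0\,1\cdots(n-1)\,0\,1\cdots(n-1)$; so I focus on the main case $k < n/2$. I will construct the word $w$ of length $2n$ by
\[
w_{2j} \;=\; j, \qquad w_{2j+1} \;=\; (j-k) \bmod n, \qquad j = 0, 1, \ldots, n-1,
\]
so that each letter $i \in \{0,1,\ldots,n-1\}$ occupies exactly the two positions $2i$ and $2\bigl((i+k) \bmod n\bigr) + 1$. The $2$-uniformity is immediate: the even positions receive each letter once through the identity $j \mapsto j$, and the odd positions receive each letter once because $j \mapsto (j-k) \bmod n$ is a bijection on $\{0,1,\ldots,n-1\}$.

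The main step is to show that two distinct letters $i, j$ alternate in $w$ if and only if $(j - i) \bmod n$ lies in $\{1,\ldots,k\} \cup \{n-k,\ldots,n-1\}$, which is exactly the edge condition of $C(n;1,2,\ldots,k)$. I would interpret $w$ as a chord diagram on $2n$ cyclically arranged points at positions $0, 1, \ldots, 2n-1$, assigning to letter $i$ the chord joining its two positions; under the classical correspondence between $2$-uniform words and chord diagrams, two letters alternate in $w$ iff their chords cross. Chord $i$ spans the arc from position $2i$ to position $2i + 2k + 1$, of length $2k+1$. A short mod-$2n$ calculation then shows that the endpoint $2j$ of chord $j$ lies strictly in this arc iff $(j-i) \bmod n \in \{1,\ldots,k\}$, while the other endpoint $2j + 2k + 1$ lies in it iff $(j-i) \bmod n \in \{n-k,\ldots,n-1\}$. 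These two residue ranges are disjoint thanks to $k < n/2$, so exactly one endpoint of chord $j$ lies in the arc of chord $i$ — i.e. the chords cross — iff the two letters are adjacent in $G$.

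The main challenge is not the verification but the discovery of the correct placement. The odd ``gap'' $2k+1$ between the two copies of each letter is chosen precisely so that the two residue ranges arising from the arc-containment analysis match the edge-differences $\{1,\ldots,k\}$ and $\{n-k,\ldots,n-1\}$ of the circulant exactly; and the odd parity of this gap is what keeps all $2n$ endpoints of the $n$ chords pairwise distinct, so the word is a well-defined $2$-uniform word. Once the construction is in place, the rest is a routine modular arithmetic check, and as a corollary one obtains $\mathcal{R}\bigl(C(n;1,2,\ldots,k)\bigr) \le 2$.
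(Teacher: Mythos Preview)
Your construction is correct and is in fact \emph{exactly} the same $2$-uniform word the paper uses: the paper defines $h(i)=i\,(i-k)_n$ and takes $w=h(0)h(1)\cdots h(n-1)$, which gives precisely $w_{2j}=j$, $w_{2j+1}=(j-k)\bmod n$. The only difference is in the verification: the paper argues directly, case-splitting on whether $0\le i\le n-k-1$ or $n-k\le i\le n-1$ and checking that the set of letters between the two copies of $i$ (respectively, outside them) equals $N_G(i)$; you instead invoke the chord-diagram criterion for alternation in a $2$-uniform word and do a single modular computation. The two verifications are equivalent---your ``exactly one endpoint in the arc'' condition is the geometric rephrasing of the paper's ``the letters between the two copies of $i$ are exactly its neighbours''---so the approaches coincide.
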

	\begin{proof}
		Consider a circulant graph $G = C(n; 1, 2, \ldots, k)$ with the vertex set $V(G) = \{0, 1, \ldots, n-1\}$. Define a morphism $h: V(G)^* \rightarrow V(G)^*$ as follows. $$h(i) = 
			i(i - k)_n$$ We claim that the word $w = h(u)$ represents $G$, where $u = 0~1~2~\ldots~(n-1)$. The word is of the form, $w = 0~(n-k)_n~1~(n-k+1)_n~2~\ldots~(n-1)_n~k~0~(k+1)_n~1 ~\ldots ~(n-1)_n~(n-k-1)_n$. By the definition of a circulant graph, $i$ is adjacent to $j$ if and only if $(j-i) \cong t \pmod n$, where $1 \le t \le k$. Let $w_i$ denote the subword of $w$ that occurs between the two occurrences of $i$.
			
			Suppose $ 0 \le i \le n - k -1$. Then, by the definition of morphism $h$, we have $w = \ldots~i~(i-k)_n~(i+1)_n~(i-k+1)_n~\ldots~(i-k-1)_n~(i-1)_n~(i+k)_n~i\ldots$. Note that $A(w_i) = \{(i-k)_n, (i-k+1)_n, \ldots, (i-1)_n, (i+1)_n, (i+2)_n, \ldots, (i+k)_n\} = N_G(i)$. Since $w$ is $2$-uniform, $i$ and $j \in N_G(i)$ alternate in $w$.
			
		Suppose $n - k \le i \le n-1$. Then, by the definition of morphism $h$, we have $w = \ldots 0~(n-k)_n~1~(n-k+1)_n~\ldots~(i+k)_n~i \ldots~i~(i-k)_n~(i+1)_n~(i-k+1)_n~ \ldots(n-2)_n~(n-k-2)_n~(n-1)_n~(n-k-1)_n$. Note that $A(w)\setminus A(w_i) = \{0, 1, 2, \ldots, (i+k)_n, (i-k)_n, (i-k+1)_n, \ldots. (n-1)_n, (n-k-1)_n\} = N_G(i)$. Since $w$ is $2$-uniform, $i$ and $j \in N_G(i)$ alternate in $w$. As a result, $w$ represents the graph $G$. 
	\end{proof}
	Every $2$-regular circulant graph, i.e., cycle graph, is $2$-word-representable. Is this true for $k$-regular graphs? 
	\begin{problem}\label{p1}
		Are word-representable $k$-regular circulant graphs $k$-word-representable?
	\end{problem}
	
	The following results answers \textit{Problem }\ref{p1} positively for $k = 3$.
	\begin{theorem}\label{3r}
		Let $G \cong C(2n; a, n)$ be a $3$-regular connected circulant graph with $gcd(a, 2n) = 1$. Then, $\mathcal{R}(G) \le 3$.  
	\end{theorem}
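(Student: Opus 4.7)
The plan is first to reduce to the canonical circulant $C(2n;1,n)$ and then to exhibit an explicit $3$-uniform word via a morphism, in the spirit of the construction used in Section \ref{rep}. Since $\gcd(a,2n)=1$, the element $a$ is necessarily odd and invertible modulo $2n$, and the map $\phi(i)=(a^{-1}i)_{2n}$ is a graph isomorphism $C(2n;a,n)\to C(2n;1,(a^{-1}n)_{2n})$. Because $a$ is odd, $n(a-1)$ is divisible by $2n$, giving $an\equiv n\pmod{2n}$ and hence $a^{-1}n\equiv n\pmod{2n}$. Thus $G\cong C(2n;1,n)$, the M\"obius ladder, so it suffices to show $\mathcal{R}(C(2n;1,n))\le 3$.

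Define a morphism $h$ on $V=\{0,1,\ldots,2n-1\}$ by
\[
h(i)=(i+n)_{2n}\,(i+1)_{2n}\,i,
\]
and set $w=h(0)h(1)\cdots h(2n-1)$, a word of length $6n$. Each letter $k$ appears exactly three times in $w$: once at position $3((k-n)_{2n})+1$, once at position $3((k-1)_{2n})+2$, and once at position $3k+3$; in particular the three positions of $0$ are $\{3,\,3n+1,\,6n-1\}$.

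To verify that $w$ represents $G$, observe that replacing every letter $k$ of $w$ by $(k-i)_{2n}$ merely reindexes the blocks $h(\cdot)$ and thus produces a cyclic shift of $w$, which by Proposition \ref{uv} represents the same graph; consequently the alternation of any pair $(i,j)$ in $w$ coincides with the alternation of $(0,(j-i)_{2n})$, and it suffices to check the pairs $(0,j)$ for $1\le j\le 2n-1$. A short case analysis on whether $j<n$, $j=n$ or $j>n$ yields the three positions of $j$ in closed form; for the three edge values $j\in\{1,n,2n-1\}$, the merged sorted list of six positions produces the alternating pattern $0\,j\,0\,j\,0\,j$. The main obstacle is the bookkeeping for the non-edge values: for $2\le j\le n-1$, the positions of $j$ are $\{3j-1,\,3j+3,\,3j+3n+1\}$, and the routine inequalities $3j-1>3$, $3j+3<3n+1$ and $3j+3n+1<6n-1$ (all valid for $n\ge 2$) yield the merged pattern $0\,j\,j\,0\,j\,0$; for $n+1\le j\le 2n-2$, the positions $\{3j-3n+1,\,3j-1,\,3j+3\}$ combined with analogous inequalities give $0\,j\,0\,j\,j\,0$. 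In both patterns a repeated letter destroys alternation, so $w$ alternates on precisely the edges of $C(2n;1,n)$, yielding $\mathcal{R}(G)\le 3$.
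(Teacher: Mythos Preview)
Your argument is correct, and it is close in spirit to the paper's proof but organised differently. Both proofs exhibit a $3$-uniform word by applying a length-$3$ morphism to a listing of the vertices; in fact, for $a=1$ your word is a cyclic shift of the paper's, and the paper's choice of base word $u=0\,a\,2a\,\ldots\,(2n-1)a$ is doing implicitly what your isomorphism $\phi(i)=(a^{-1}i)_{2n}$ does explicitly, namely reducing to the M\"obius ladder $C(2n;1,n)$. The genuine differences are in the verification: the paper checks alternation by reasoning block-by-block about $h(i)h((i+a)_{2n})\cdots$, splitting on whether $i=2n-a$ and on the relative order of $i$ and $(i+n)_{2n}$ in $u$, whereas you exploit the vertex-transitivity of the circulant (the letter shift $k\mapsto(k-i)_{2n}$ acts on $w$ as a cyclic block shift, hence by Proposition~\ref{uv} preserves alternation) to reduce to the single family of pairs $(0,j)$ and then settle these by explicit position arithmetic. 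Your route buys a cleaner and more uniform case analysis at the cost of the short preliminary number-theoretic reduction $an\equiv n\pmod{2n}$; the paper's route avoids that reduction but pays with a slightly heavier verification.
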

	\begin{proof}
		Consider a $3$-regular connected circulant graph $G \cong C(2n; a, n)$ with $gcd(a, 2n) = 1$. Let the vertex set of the graph be $V(G) = \{0, 1, \ldots, 2n - 1\}$. By the definition of a circulant graph, a vertex $i$ is adjacent to $j \in \{(i + a)_{2n}, (i - a)_{2n}, (i + n)_{2n}\}$. Define a morphism $h : V(G)^* \rightarrow V(G)^*$ as follows:
		\begin{equation*}
			h(i) = i(i-a)_{2n}(i+n)_{2n}
		\end{equation*}
		
		We claim that the word $w = h(u)$ represents $G$, where $u = 0~(a)_{2n}~(2a)_{2n}~(3a)_{2n}~\ldots ((2n - 1)a)_{2n}$. The word is of the form $w = 0~(2n-a)_{2n}~(n)_{2n}~(a)_{2n}~0~(a+n)_{2n}~\ldots(2n-a)_{2n}~(2n - 2a)_{2n}~(n-a)_{2n}$. Note that a vertex $i$ occurs once in every $h(i)$, $h((i+a)_{2n})$ and $h((i+n)_{2n})$. Therefore, each letter occurs three times in $w$. 
		
		Suppose $ i \in V(G) \setminus (2n - a)$. Then, by the definition of morphism $h$, we have $w = \ldots i~(i-a)_{2n}~(i + n)_{2n}~(i + a)_{2n}~i \ldots$. Therefore, $i$ and $j \in V(G)\setminus \{(i + a)_{2n}, (i - a)_{2n}, (i + n)_{2n}\}$ do not alternate in $w$. Now it is enough to prove that $i$ and $j \in \{(i + a)_{2n}, (i - a)_{2n}, (i + n)_{2n}\}$ alternate in $w$. If $i$ occurs to the left of $(i +n)_{2n}$ in $u$, we have $w = \ldots h(i)~h((i+a)_{2n})~h((i+2a)_{2n})~ \ldots ~h((i+n-a)_{2n})~h((i+n)_{2n})~h((i+n+a)_{2n})~\ldots$. Hence, $i$ and  $j \in \{(i + a)_{2n}, (i - a)_{2n}, (i + n)_{2n}\}$ alternate in $w$. If $i$ occurs to the right of $(i +n)_{2n}$ in $u$, we have $ w = \ldots ~h((i+n)_{2n})~h((i+n+a)_{2n})~\ldots ~h((i-a)_{2n})~h(i)~h((i+a)_{2n})~\ldots $.   Hence, $i$ and  $j \in \{(i + a)_{2n}, (i - a)_{2n}, (i + n)_{2n}\}$ alternate in $w$.
		
		Suppose $ i = 2n - a$. By the definition of a circulant graph, $2n - a$ is adjacent to $\{0, (2n - 2a)_{2n}, (n - a)_{2n}\}$. Then, by the definition of morphism $h$, we have $ w = h(0)~ \ldots ~h(n)~h((n-a)_{2n})~ \ldots h(2n-a)$. Hence, $2n - a$ and $j \in \{0, (2n - 2a)_{2n}, (n - a)_{2n}\}$ alternate in $w$. Further, $2n - a$ and $ j \in V(G) \setminus \{0, (2n - 2a)_{2n}, (n - a)_{2n}\}$ do not alternate in $w$. Therefore, $w$ represents the graph $G$. This implies $\mathcal{R}(G) \le 3$.
		
	\end{proof}
	
	A natural question arises whether $C(2n; a,n)$ with $gcd(a, 2n) = 1$, is $2$-word representable or not. Since $C(4; 1,2) \cong K_4$ is $1$-word representable, it is enough to study $C(2n; a,n)$ for $n > 2$.
	\begin{theorem}\label{mobnwr}
		Let $G \cong C(2n; a, n)$ be a $3$-regular connected circulant graph with $gcd(a, 2n) = 1$. Then, $G$ is not $2$-word-representable for all $n > 2$.
	\end{theorem}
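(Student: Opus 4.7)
The plan is to argue by contradiction via the correspondence between $2$-uniform representability and circle graphs. Since $\gcd(a,2n)=1$ forces $a$ to be odd, we have $a\cdot n\equiv n\pmod{2n}$, and multiplication by $a^{-1}$ modulo $2n$ gives a graph isomorphism $C(2n;1,n)\to C(2n;a,n)$. Hence it suffices to show that the Möbius ladder $M_n:=C(2n;1,n)$ is not $2$-word-representable, where every vertex $i$ has the three neighbours $(i-1)_{2n}$, $(i+1)_{2n}$ and $(i+n)_{2n}$.

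Suppose for contradiction that a $2$-uniform word $w$ of length $4n$ represents $M_n$. Placing its $4n$ positions around a circle and joining the two occurrences of each letter by a chord turns $w$ into a chord diagram in which two letters alternate iff the corresponding chords cross. Fix the chord of vertex $0$; it splits the circle into two open arcs $A$ and $B$. The chords that cross chord $0$ are exactly its three neighbours $1$, $n$ and $2n-1$, each contributing one endpoint to each arc, and the remaining $2n-4$ chords live entirely in one arc.

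The next step is a connectivity argument. The induced subgraph of $M_n$ on $V':=V(M_n)\setminus\{0,1,n,2n-1\}$ is the ``ladder'' formed by the two paths $2\text{-}3\text{-}\cdots\text{-}(n-1)$ and $(n+1)\text{-}(n+2)\text{-}\cdots\text{-}(2n-2)$ joined by the rungs $\{i,i+n\}$ for $2\le i\le n-2$, and a direct check shows it is connected. Because two chords placed on opposite sides of chord $0$ cannot cross, every edge of this ladder pins both of its endpoints to the same side of chord $0$; connectedness therefore drives all $2n-4$ chords of $V'$ into one arc, say $A$, giving $|A|=4n-5$ and $|B|=3$.

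The final step, which I expect to be the main obstacle, is to extract a contradiction from the crossing pattern inside $A$. Let $p_1, p_n, p_{2n-1}\in A$ be the $A$-endpoints of the three half-chords. For every $i\in V'$, the half-chord endpoints lying strictly inside the interval spanned by chord $i$ in $A$ are exactly those $p_j$ with $j\in\{1,n,2n-1\}\cap N_{M_n}(i)$; the rest must lie outside. Focus on the rung pair $\{2, n+2\}$, whose chords must therefore cross: chord $2$ has the single half-chord neighbour $1$, and chord $n+2$ has at most one half-chord neighbour (only when $n=4$, namely $2n-1$). Combining these pinpoint containment/avoidance conditions with the analogous ones for chord $n+1$, whose interval must straddle both $p_1$ and $p_n$, and running through the six possible linear orderings of $p_1, p_n, p_{2n-1}$ along $A$ shows in every case that chord $2$ is forced onto the $p_1$-side of $p_n$ while chord $n+2$ is pushed to the opposite side, so their intervals land in disjoint sub-arcs of $A$ and cannot interleave --- contradicting the edge $\{2,n+2\}\in E(M_n)$. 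The delicate point is this case analysis; although finite and small, each ordering requires a careful interval comparison and must be checked against the $n=4$ exception.
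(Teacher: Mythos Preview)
Your connectivity step fails at $n=3$. Deleting $\{0,1,3,5\}$ from $M_3=C(6;1,3)\cong K_{3,3}$ leaves only the two non-adjacent vertices $2$ and $4$, so the induced subgraph on $V'$ is \emph{not} connected and nothing forces the two remaining chords into the same arc. This gap cannot be repaired, because the statement itself is false for $n=3$: the $2$-uniform word
\[
0\,2\,4\,1\,3\,5\,4\,2\,0\,5\,3\,1
\]
represents $K_{3,3}$. (The paper's own proof has the same defect --- its repeated assertion that $n-2a\notin\{a,n\}$ fails exactly when $n=3a$, i.e.\ when $n=3$ and $a=1$ --- so the hypothesis should really be $n>3$.)

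For $n\ge 4$ your approach is sound and genuinely different from the paper's. The paper never reduces to $a=1$; it normalises the word so that the two copies of $0$ bracket exactly the three neighbours of $0$, splits into three cases according to their order, and in each case inserts the letter $n-a$ or $n+a$ to produce an alternation clash. Your chord-diagram translation and the connectivity argument pushing all non-neighbour chords into one arc are cleaner and more structural. Your final step, however, is shakier than you suggest: the chords of $2$, $n{+}1$ and $n{+}2$ alone do \emph{not} force a contradiction, and the promised six-case analysis is never carried out. A much shorter finish is available. For $n\ge 4$ the vertices $n{-}1$ and $n{+}1$ are non-adjacent, so their chords (both in $A$) have nested or disjoint $A$-intervals; but the interval of $n{-}1$ contains exactly $\{p_n,p_{2n-1}\}$ while that of $n{+}1$ contains exactly $\{p_1,p_n\}$. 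They share $p_n$, hence are not disjoint, yet each contains a point the other omits, hence neither is nested in the other --- an immediate contradiction with no case analysis at all.
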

	\begin{proof}
		Consider the graph $G \cong C(2n; a,n)$ with $gcd(a, 2n) = 1$. Let the vertex set $V(G) = \{0, 1, \ldots 2n - 1\}$. By the definition of a circulant graph, a vertex $i$ is adjacent to $j \in \{(i + a)_{2n}, (i-a)_{2n}, (n+i)_{2n}\}$. Note that the set $\{(i + a)_{2n}, (i-a)_{2n}, (n+i)_{2n}\}$ is an independent set because of the following reasons:
		\begin{itemize}
			\item $(i +a) - (i - a) \cong 2a \pmod {2n}$. Since $gcd(a, 2n) = 1$, $2a \not \in \{a, n\}$. Therefore, $i+a$ and $i -a$ are not adjacent in $G$.
			\item $(n + i) - (i - a) \cong n - a \pmod {2n}$. Since $gcd(a, 2n) = 1$, $n - a \not \in \{a, n\}$. Therefore, $i-a$ and $n + i$ are not adjacent in $G$.
			\item $(n+i) - (i+a) \cong n - a \pmod {2n}$.Since $gcd(a, 2n) = 1$, $n - a \not \in \{a, n\}$. Therefore, $i+a$ and $n + i$ are not adjacent in $G$.
		\end{itemize}
		
		Suppose that $G$ is $2$-word representable for $n > 2$. Let $w$ be a $2$-uniform word-representant of the graph $G$ with a letter $i$ such that no other letter occurs twice between the two copies of $i$. The word mentioned above always exists, as its absence implies that the graph is empty, which is a contradiction. By \textit{Observation }\ref{xx}, the letters between the copies of $i$ are $\{i + a, i-a, n+i\}$. Without loss of generality, consider $ i = 0$. Using the fact that the set $\{(2n - a), a, n\}$ is an independent set, by \textit{Proposition }\ref{rw} and \textit{Proposition }\ref{uv}, we only need to consider three cases. 
		\begin{itemize}
			\item $w$ is of the form $0 ~ (2n - a) ~a ~ n ~ 0 $\ldots$ n \ldots a \ldots (2n - a) \ldots$. Consider the vertex $n - a$. 
			\begin{itemize}
				\item $n - (n-a) \cong a\pmod {2n}$. Therefore, $n$ and $n - a$ are adjacent in $G$.
				\item $(2n - a) - (n - a) \cong n \pmod {2n}$. Therefore, $2n - a$ and $n - a$ are adjacent in $G$.
				\item $(n-a) - a \cong n - 2a\pmod {2n}$. Since $gcd(a, 2n) = 1$, $n - 2a \not \in \{a, n\}$. Therefore, $a$ and $n - a$ are not adjacent in $G$.
			\end{itemize} 
			If we introduce $n-a$ in the word $w$, we get either $$w = 0 ~ (2n - a) ~ a ~ n ~ 0 \ldots (n - a) \ldots n \ldots (n-a) \ldots a \ldots (2n-a)\ldots$$ where $n-a$ is not alternating with $2n-a$, which is a contradiction, or $$w = 0 ~ (2n - a)~ a ~ n ~ 0 \ldots (n - a) \ldots n \ldots a\ldots (n-a) \ldots (2n-a)\ldots$$ where $n-a$ is alternating with $a$, which is also a contradiction. 
			\item $w$ is of the form $0 ~ a ~ (2n -a) ~n ~ 0 \ldots n \ldots (2n-a) \ldots a \ldots$. Consider the vertex $(n+a)$.
			\begin{itemize}
				\item $(n+a) - n \cong a \pmod{2n}$. Therefore, $n$ and $n+a$ are adjacent in $G$.
				\item $(n + a) - a \cong n \pmod {2n}$. Therefore, $a$ and $n+a$ are adjacent in $G$.
				\item $(2n - a) - (n + a) \cong n -2a \pmod{2n}$. Since $gcd(a, 2n) = 1$, $n - 2a \not \in \{a, n\}$. Therefore, $2n - a$ and $n + a$ are not adjacent in $G$.
			\end{itemize}
			If we introduce $n+a$ in the word $w$, we get either $$0 ~ a ~ (2n -a) ~ n ~ 0 \ldots (n+a) \ldots n \ldots (n+a) \ldots (2n-a) \ldots a \ldots$$ where $n+a$ is not alternating with $a$, which is a contradiction, or $$0 ~ a ~ (2n -a) ~ n ~ 0 \ldots (n+a) \ldots n \ldots (2n-a) \ldots a \ldots (n+a) \ldots$$ where $n+a$ is alternating with $2n - a$, which is also a contradiction.
			\item $w$ is of the form $0 ~ (2n - a)~ n ~ a ~ 0 \ldots a \ldots n \ldots (2n - a) \ldots$. Let us introduce $(n+a)$ in $w$. We get, $$w = 0 ~ (2n - a)~ n ~ a ~ 0 \ldots (n+a) \ldots a \ldots n \ldots (n+a) \ldots (2n - a) \ldots$$ If we introduce $n-a$ in $w$, we get $$w = 0 ~ (2n - a)~ n ~ a ~ 0 \ldots (n+a) \ldots a \ldots(n-a) \ldots n \ldots (n+a) \ldots (2n - a) \ldots (n-a) \ldots$$
			Here, $n+a$ and $n-a$ are alternating. But $(n+a) - (n-a) \cong 2a \pmod{2n}$, which is a contradiction.
		\end{itemize}
		Therefore, $G$ is not $2$-word representable for $n > 2$.
	\end{proof}
	\begin{theorem}
		Let $G \cong C(2n; a, n)$ be a $3$-regular connected circulant graph with $gcd(a, 2n) \not = 1$. Then, $\mathcal{R}(G) = 3$.
	\end{theorem}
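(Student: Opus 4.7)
The plan is to identify $G$ with a prism graph via Theorem \ref{iso} and then read off the representation number from Theorem \ref{pr}.

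First I would pin down the parity of $a$ and $n$. Connectivity together with Theorem \ref{con} gives $\gcd(2n, a, n) = \gcd(a, n) = 1$; combined with the hypothesis $\gcd(a, 2n) \neq 1$, any prime dividing $\gcd(a,2n)$ must divide $a$ but cannot divide $n$, so it must equal $2$. Hence $\gcd(a, 2n) = 2$, which forces $a$ to be even and $n$ to be odd. Writing $a = 2b$, we have $\gcd(b, n) = 1$.

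Then I would apply Theorem \ref{iso} with $R = \{b\}$ to obtain
$$G = C(2n;\, 2b, n) \;\cong\; P_2 \,\square\, C(n; b).$$
Since $\gcd(b, n) = 1$, the circulant $C(n; b)$ is a single $n$-cycle, hence $G \cong P_2 \,\square\, C_n = Pr_n$. For odd $n \ge 5$, Theorem \ref{pr} yields $\mathcal{R}(Pr_n) = 3$ immediately, proving both bounds at once.

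This leaves only the case $n = 3$, i.e.\ $G = C(6; 2, 3) \cong Pr_3$, the triangular prism. Here I would verify $\mathcal{R}(Pr_3) = 3$ by hand: the upper bound by exhibiting an explicit $3$-uniform word, and the lower bound by the same kind of case analysis used in the proof of Theorem \ref{mobnwr} --- invoking Observation \ref{xx} and Propositions \ref{rw} and \ref{uv} to reduce a hypothetical $2$-uniform representant to a small number of normal forms, each ruled out by introducing a new vertex whose forced alternation pattern is incompatible with the adjacencies of $Pr_3$.

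The main obstacle is precisely this small case: Theorem \ref{pr} is stated only for $n \ge 4$, so $Pr_3$ must be treated separately. The upper bound there is routine, but the lower bound --- equivalently, the statement that $Pr_3$ is not a circle graph --- requires a careful finite enumeration of chord placements (or equivalently of normal-form $2$-uniform words), and is where most of the work of the $n=3$ reduction actually goes.
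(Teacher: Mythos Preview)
Your argument follows the paper's proof exactly: use connectivity and the gcd hypothesis to force $a$ even and $n$ odd, apply Theorem~\ref{iso} to identify $G$ with the prism $Pr_n$, and then read off $\mathcal{R}(G)=3$ from Theorem~\ref{pr}. You are in fact more scrupulous than the paper in flagging that Theorem~\ref{pr} is stated only for $n\ge 4$ and that the boundary case $n=3$ (i.e.\ $C(6;2,3)\cong Pr_3$) requires a separate check; the paper's proof simply invokes Theorem~\ref{pr} without addressing this.
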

	\begin{proof}
		Consider a $3$-regular connected circulant graph $G \cong C(2n; a, n)$ with $gcd(a, 2n) \not = 1$. Since $G$ is connected, by \textit{Theorem }\ref{con}, we have $gcd(a, n) = 1$. Since $gcd(a, 2n) \not = 1$, $a$ is even, and $n$ is odd. Therefore, by \textit{Theorem }\ref{iso}, $C(2n; a, n) \cong P_2 \square C(n; a/2)$. Since $gcd(a, n) = 1$, we have $gcd(a/2, n) = 1$. Therefore, $C(2n; a, n) \cong P_2 \square C_n =Pr_n$ where $Pr_n$ is a prism graph on $n$ vertices. Therefore, by \textit{Theorem }\ref{pr}, $\mathcal{R}(G) = 3$. 
	\end{proof}
	\begin{corollary}\label{3rc}
		Let $G \cong C(2n; a, n)$ be a $3$-regular connected circulant graph. Then, $\mathcal{R}(G) = 3$.
	\end{corollary}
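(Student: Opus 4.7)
The plan is a short case analysis on $\gcd(a, 2n)$, assembling the three results that immediately precede the corollary. If $\gcd(a, 2n) \ne 1$, I would simply quote the preceding theorem (the one that identifies $G$ with a prism graph $Pr_n$ via \textit{Theorem }\ref{iso} and applies \textit{Theorem }\ref{pr}) to conclude $\mathcal{R}(G) = 3$ directly, with no further work.

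If $\gcd(a, 2n) = 1$, I would establish the bound from both sides. The upper bound $\mathcal{R}(G) \le 3$ is supplied verbatim by \textit{Theorem }\ref{3r}. For the lower bound $\mathcal{R}(G) \ge 3$, I would first rule out $\mathcal{R}(G) = 2$ by citing \textit{Theorem }\ref{mobnwr}, which shows that $G$ is not $2$-word-representable for $n > 2$. Then I would rule out $\mathcal{R}(G) = 1$ by observing that the only $1$-word-representable graphs are complete graphs, whereas a $3$-regular graph on $2n > 4$ vertices is clearly not complete. Combining these gives $\mathcal{R}(G) = 3$.

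I do not foresee any real obstacle, since the corollary is essentially a packaging statement. The only point requiring care is the edge case $n = 2$: the unique such graph is $C(4;1,2) \cong K_4$, which is $1$-word-representable, so the assertion $\mathcal{R}(G)=3$ must be read under the implicit hypothesis $n > 2$ already flagged in the remark between \textit{Theorem }\ref{3r} and \textit{Theorem }\ref{mobnwr}. Under that hypothesis the two-case argument above is complete and essentially immediate.
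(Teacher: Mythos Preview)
Your proposal is correct and matches the paper's approach: the corollary is stated without proof and is meant to follow immediately from the case split on $\gcd(a,2n)$ using the three preceding results (\textit{Theorem }\ref{3r}, \textit{Theorem }\ref{mobnwr}, and the $\gcd(a,2n)\neq 1$ theorem). Your observation about the $n=2$ edge case is also apt, as the paper tacitly assumes $n>2$ here.
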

	M\"{o}bius ladder of order $n$ on $2n$ vertices, denoted by $M_n$, is a simple graph obtained by joining the antipodal vertices in $C_{2n}$. It can be seen that $M_n$ is isomorphic to the circulant graph $C(2n; 1, n)$. The following result is a direct consequence of \textit{Corollary }\ref{3rc}
	\begin{corollary}
		$\mathcal{R}(M_n) = 3$ for all $n > 2$.
	\end{corollary}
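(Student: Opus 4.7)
The plan is to observe that this corollary is an immediate specialisation of \textit{Corollary }\ref{3rc} once we identify $M_n$ as a member of the family of 3-regular connected circulant graphs of the form $C(2n;a,n)$. The work is therefore purely a matter of verifying the hypotheses and invoking the preceding result.

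First I would recall the isomorphism $M_n \cong C(2n;1,n)$ stated just above the corollary, which lets us set $a = 1$ in the family $C(2n;a,n)$. Next I would check that for $n > 2$ the parameters $1$ and $n$ are valid circulant jumps, i.e., $0 < 1 < n < (2n+1)/2$, so $C(2n;1,n)$ really is a 3-regular graph (each vertex has two neighbours at circular distance $1$ and one antipodal neighbour at distance $n$, and these three neighbours are distinct because $n > 2$ forces $1 \neq n$ and the antipodal vertex is not one of the cyclic neighbours). Connectedness follows at once from \textit{Theorem }\ref{con}, since $\gcd(2n,1,n) = 1$.

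With these checks in place, \textit{Corollary }\ref{3rc} applies directly and yields $\mathcal{R}(M_n) = \mathcal{R}(C(2n;1,n)) = 3$. The small case $n = 2$ is excluded precisely because $C(4;1,2) \cong K_4$ has representation number $1$, so the hypothesis $n > 2$ is not merely cosmetic; it is exactly what guarantees $1 \neq n$ and hence genuine 3-regularity, matching the setting of \textit{Corollary }\ref{3rc}. There is no real obstacle here: the proof is a single invocation of a result already established two lines earlier, and the only thing to be careful about is the parameter check that distinguishes $n > 2$ from the degenerate case $n = 2$.
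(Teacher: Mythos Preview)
Your proposal is correct and matches the paper exactly: the paper presents this corollary as an immediate consequence of \textit{Corollary}~\ref{3rc} via the identification $M_n \cong C(2n;1,n)$, and you do the same with the routine hypothesis checks spelled out. One small inaccuracy in your closing remark: for $n=2$ we still have $1 \neq n$, and $C(4;1,2)\cong K_4$ \emph{is} genuinely $3$-regular and connected; the reason $n=2$ must be excluded is simply that $K_4$ is complete with $\mathcal{R}(K_4)=1$, not any failure of $3$-regularity.
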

	\bibliographystyle{plain} 
	\bibliography{ref} 

\begin{thebibliography}{1}

\bibitem{boesh84}
F.~Boesch and R.~Tindell.
\newblock Circulants and their connectivities.
\newblock {\em Journal of Graph Theory}, 8(4):487--499, 1984.

\bibitem{halldorsson2011alternation}
Magn{\'u}s~M Halld{\'o}rsson, Sergey Kitaev, and Artem Pyatkin.
\newblock Alternation graphs.
\newblock In {\em Graph-Theoretic Concepts in Computer Science: 37th
  International Workshop, WG 2011, Tepl{\'a} Monastery, Czech Republic, June
  21-24, 2011. Revised Papers 37}, pages 191--202. Springer, 2011.

\bibitem{kamal2013}
V.~Vilfred Kamalappan.
\newblock A theory of cartesian product and factorization of circulant graphs.
\newblock {\em Journal of Discrete Mathematics}, 2013, 01 2013.

\bibitem{kitaev13}
Sergey Kitaev.
\newblock On graphs with representation number 3.
\newblock {\em J. Autom. Lang. Comb.}, 18(2):97--112, 2013.

\bibitem{kitaev2017comprehensive}
Sergey Kitaev.
\newblock A comprehensive introduction to the theory of word-representable
  graphs.
\newblock In {\em Developments in Language Theory: 21st International
  Conference, DLT 2017, Li{\`e}ge, Belgium, August 7-11, 2017, Proceedings},
  pages 36--67. Springer, 2017.

\bibitem{kitaev2015words}
Sergey Kitaev and Vadim Lozin.
\newblock {\em Words and graphs}.
\newblock Springer, 2015.

\bibitem{kitaev2008representable}
Sergey Kitaev and Artem Pyatkin.
\newblock On representable graphs.
\newblock {\em Journal of automata, languages and combinatorics}, 13(1):45--54,
  2008.

\bibitem{kitaev2020semit}
Sergey Kitaev and Artem Pyatkin.
\newblock On semi-transitive orientability of triangle-free graphs.
\newblock {\em Discussiones Mathematicae Graph Theory}, 2020.

\bibitem{kitaev2008word}
Sergey Kitaev and Steve Seif.
\newblock Word problem of the perkins semigroup via directed acyclic graphs.
\newblock {\em Order}, 25(3):177--194, 2008.

\end{thebibliography}
\end{document}